\newcommand{\N}{{\mathbf{N}}}
\newcommand{\C}{{\mathbf{C}}}
\newcommand{\R}{{\mathbf{R}}}
\newtheorem{theorem}{Theorem}[section]
\newtheorem{rem}{Remark}[section]
\newtheorem{prop}[theorem]{Proposition}
\title{ Simultaneous approximate tracking of density matrices for a system of Schr\"{o}dinger equations
}
\author{Thomas Chambrion
\thanks{This work was supported by BQR R\'egion Lorraine-Nancy Universit\'e}
\thanks{The author is with IECN/INRIA/Nancy Universit\'e, UMR7502, BP 239
54506 Vand{\oe}uvre, France} \\
{{\tt\small Thomas.Chambrion@esstin.uhp-nancy.fr}}}
\begin{document}

\maketitle
\thispagestyle{empty}
\pagestyle{empty}

\begin{abstract}
We consider a non-resonant system of finitely many bilinear Schr\"odinger equations with discrete spectrum driven by the same scalar control. We prove that this system can approximately track any given system of trajectories of density matrices, up to the phase of the coordinates. The result is valid both for bounded and unbounded Schr\"odinger operators. The method used relies on finite-dimensional control techniques applied to Lie groups. We provide also an example showing that no approximate tracking of both modulus and phase is possible 
\end{abstract}

\section{Introduction}
\subsection{Physical Context}

The Schr\"odinger equation describes the evolution of 
the probability distribution of the position of a particle in the space. 
The evolution of the Schr\"{o}dinger equation  can be 
modified by acting on the electric field, e.g., through 
the action of a laser.

We will be interested in this paper in
non-relativistic and non-stochastic Schr\"{o}dinger equations
on a domain (i.e., an open connected subset) $\Omega$ of $\R^d$
that is either
bounded 
or equal to the whole $\R^d$ ($d\in\N$). To each equation, we associate
a {\it Schr\"odinger operator}  defined as
$$(x\mapsto \psi(x))\mapsto(x\mapsto -\Delta \psi (x)+ V(x)\psi(x)),\ \ \ \ x\in\Omega,$$
 where $\psi$ denotes the wave function
  and the real-valued function $V$ is called the \emph{potential} of the Schr\"odinger operator. The wave function verifies
$\int_\Omega \psi^2=1$.
We assume moreover that $V$ is extended as $+\infty$ on $\R^d\smallsetminus \Omega$, so that, in the case $\Omega$ bounded, $\psi$
satisfies the boundary condition $\psi|_{\partial \Omega}=0$.
The controlled Schr\"odinger equation with one scalar control
is the evolution equation
\begin{equation} \label{EQ-main}
i \frac{d \psi}{dt}= -\Delta \psi(x,t) +V(x)\psi(x,t)+ u(t) W(x)\psi(x,t),
\end{equation}
where the real-valued function $W$ is the \emph{controlled potential}.

The control function $u:[0,T]\rightarrow \R$ is chosen among a set of admissible control functions in order to steer the quantum particle from its initial state to a prescribed target. A classical result of Ball, Marsden and Slemrod asserts that in general, exact controllability is hopeless (see \cite{BMS} and \cite{Turinici} for precise statements and a proof).

The approximate controllability of one particular system of the type (\ref{EQ-main}) has already be proved by Beauchard using Coron's return method  (see \cite{Beauchard} and references therein). Approximate controllability results for general systems under generic hypotheses were proved later with completely different methods in \cite{Schrod}.

An interesting question is the simultaneous control of several Schr\"odinger equations of type (\ref{EQ-main}), describing the evolution of several particles, driven by only one control.
The most elementary case is the one where identical particles are submitted to the same control field (that is, Schr\"odinger operator and controlled potential are the same for every particle), but the initial positions and the targets differs from one particle to another. This problem is often referred as \emph{control of density matrices} by physicists. This case has already been successfully addressed in  \cite{Schrod}. 

In this work, we study the approximate controllability of a system of density matrices, covering among others the following two physicals situations:

 In the first situation, we consider a set of particles of different natures, that is, their Schr\"odinger operators are all different. Initial states and targets can be arbitrary. The control function $u$ is the same for all particles. 
 
 In the second situation, we consider a set of identical particles, with the same Schr\"odinger operator, but with different controlled potentials. This second situation is of particular interest since it is usually easier to select a set of identical particles than to ensure that they are all submitted to the same excitation. The control function $u$ is the same for all particles.

Our main result is that, under non-resonnance and connectedness conditions, in the both cases above, whatever the sources and the target are, it is possible to chose an arbitrary small piecewise constant control function $u$ in such a way that every particle approximately reaches its goal (with an arbitrary precision), and moreover that given any path joining the sources and the target, it can be followed (up to the phase) by the particles with an arbitrary precision. Our approach extends to the control of the density matrices of the considered systems.

\subsection{Mathematical framework}

We give below the abstract mathematical framework 
which will be used to formulate and prove 
the controllability results  later applied to 
the Schr\"{o}dinger equation.
The fact 
that the Schr\"{o}dinger equation fits the abstract framework has already been discussed in \cite[Section 3]{Schrod}.

Let $U$ be a subset of $\R$. Let $p$ be a nonzero positive integer. For $1\leq i \leq p$, let $H_i$ be an Hilbert space,
$n_i$ be an integer, $A_i:D(A_i)\subset H_i\rightarrow H_i$ be a densely defined (not necessarily bounded) essentially self-adjoint operator  and for $1\leq j \leq n_i$, let $B_{i,j}:D(B_{i,j})\subset H_i\rightarrow H_i$ be a densely defined (not necessarily bounded) linear operator. 

  We assume  that $\left ( (A_i)_{1\leq i \leq p},(B_{i,j})_{1\leq i \leq p, 1\leq j \leq n_i},U \right )$ satisfies
the following three conditions: (H1) $A_i$ and $B_{i,j}$ are skew-adjoint, 
(H2) there exists an orthonormal basis  $(\phi^k_i)_{k=1}^\infty$ of $H_i$ made of
eigenvectors of $A_i$, and all these eigenvectors are associated to simple eigenvalues
(H3) $\phi^k_i\in D(B_{i,j})$ for $1\leq i \leq p$, $1 \leq j\leq n_i$. A crucial consequence of these hypotheses is that
 for every $1\leq i\leq p$, $1\leq j \leq n_i$ and every $u\in U$, $A_i +u B_{i,j}$ has a  self adjoint extension on a dense subdomain of $H_i$ and  generates 
a group of unitary transformations
$e^{t(A_i+u B_{i,j})}:H_i\to H_i$.
In particular, $e^{t(A_i+u B_{i,j})}(S_i)=S_i$ for every $u\in U$, $1\leq i \leq p$, $1 \leq j \leq n_i$ and every $t\geq 0$, where
$S_i$ is the unit sphere of $H_i$.

 We consider the $\sum_{i=1}^p n_i$ conservative diagonal single input control systems
\begin{equation} \label{EQ_main_system}
\left \{
\begin{array}{lcl} 
\frac{d \psi_{1,1}}{dt}(t)&=&A_1(\psi_{1,1}(t)) +u(t) B_{1,1}(\psi_{1,1}(t))\\
\frac{d \psi_{1,2}}{dt}(t)&=&A_1(\psi_{1,2}(t)) +u(t) B_{1,2}(\psi_{1,2}(t))\\
&\vdots&\\
\frac{d \psi_{1,n_1}}{dt}(t)&=&A_1(\psi_{1,n_1}(t)) +u(t) B_{1,n_1}(\psi_{1,n_1}(t))\\
\frac{d \psi_{2,1}}{dt}(t)&=&A_2(\psi_{2,1}(t)) +u(t) B_{2,1}(\psi_{2,1}(t))\\
&\vdots&\\
\frac{d \psi_{p,n_p}}{dt}(t)&=&A_p(\psi_{p,n_p}(t)) +u(t) B_{p,n_p}(\psi_{p,n_p}(t))\\
\end{array}
\right.
\end{equation} 
with initial conditions to be specified later.

A point $\psi^0=(\psi_{1,1}^0,\psi_{1,2}^0,\ldots,\psi_{1,n_1}^0,\psi_{2,1}^0,\ldots,\psi_{p,n_p}^0)$ of $H=\prod_{i=1}^p \left ( H_i \right )^{n_i}$ and a piecewise constant function $u:[0,T]\to U$, $u=\sum_{l=1}^{L} \chi_{[t_l,t_{l+1})} u_l$ being given,
we say that the solution of (\ref{EQ_main_system}) with initial condition $\psi^0 \in  H$
 and corresponding to the  control function $u:[0,T]\to U$ is the 
  curve $$
  \begin{array}{llcl} \psi:& [0,T] & \rightarrow & H \\ &t&\mapsto& \psi(t)=(\psi_{1,1}(t),\psi_{1,2}(t),\ldots,\psi_{p,n_p}(t))\end{array}$$
 defined by   
\begin{eqnarray} \label{solu}
\psi_{i,j}(t)=e^{(t-\sum_{l=1}^{l_t-1} t_l)(A_i +u_l B_{i,j})}
&\circ e^{t_{l_t-1}(A_i +u_{l_t-1} B_{i,j})}&\\\circ \cdots &\circ e^{t_1(A_i+u_1  B_{i,j})}(\psi_{i,j}^0),
\end{eqnarray}
$1\leq i \leq  p$, $1 \leq j \leq n_i$ by
where $\sum_{l=1}^{l_t-1} t_l\leq t<\sum_{l=1}^{l_t} t_l$ and 
$u(\tau)=u_j$ if $\sum_{l=1}^{j-1} t_l\leq \tau<\sum_{l=1}^{j} t_l$.
Notice that such a $\psi_{i,j}(\cdot)$  satisfies, for every $n\in \N$ and almost every $t\in [0,T]$ 
the differential equation
\begin{equation}\label{very_weak}
\frac d{d t}\langle \psi_{i,j}(t),\phi^n_i\rangle=\langle \psi_{i,j}(t),(A_i+u(t)B_{i,j})\phi^n_i\rangle.
\end{equation}

A piecewise constant function $u:[0,T]\rightarrow \R$ being given, the propagator of the 
control system in $H_i$ $$ \frac{d \psi_{i,j}}{dt}(t)=A_i(\psi_{i,j}(t)) +u(t) B_{i,j}(\psi_{i,j}(t))$$ will be denoted by 
$\Phi_{i,j}$. By definition, $$\Phi_{i,j}(t,\psi_{i,j}^0)=\psi_{i,j}(t)=e^{(t-\sum_{l=1}^{k-1} t_l)(A_i +u_k B_{i,j})}\circ e^{t_{k-1}(A_i +u_{k-1} B_{i,j})}\circ \cdots \circ e^{t_1(A_i+u_1  B_{i,j})}(\psi_{i,j}^0)$$ for any $t$ in $[0,T]$ and any $\psi_{i,j}^0$ in $H_i$.

 The propagator of the whole system (\ref{EQ_main_system}) is denoted by $\Phi=\prod_{i=1}^p \prod_{j=1}^{n_i}\Phi_{i,j}$.

 For $1\leq i \leq p$, $1\leq j\leq n_i$, and $(k,l)\in \N^2$, we define also the numbers $a_i(k,l)=\langle A_i \phi^i_k,\phi^i_l\rangle$ and $b_{i,j}(k,l)=\langle B_{i,j} \phi^i_k,\phi^i_l\rangle$.

 A finite sequence $(k_1,k_2,\ldots,k_l)$ of $\N$ is said to \emph{connect} the two levels $k$ and $k'$ for the diagonal conservative diagonal single-input control system $ ( A_i,B_{i,j})$ if $k_1=k$, $k_l=k'$ and  $\prod_{q=1}^{l-1} b_{i,j}(k_q,k_{q+1}) \neq 0$.
 
 A subset $S$ of $\N^2$ is called a \emph{connectedness chain} of $(A_i,B_{i,j})$ if for arbitrary large $N$, for $(k,k')$ in $\N^2$, $1\leq k,k'\leq N$, there exists a finite sequence $(q_1,q_2, \ldots,q_l)$ in $[0,N]$ that connects $k$ and $k'$ for $(A_i,B_{i,j})$ and such that $(q_{r},q_{r+1})$ belongs to $S$ for every $1 \leq r \leq l-1$.

\subsection{Main result}

\begin{theorem}\label{TheSuiviPropagateur}
Assume that the system (\ref{EQ_main_system}) satisfies (i) the concatenation of the spectrum of $A_1$, $A_2$, $\ldots$, $A_p$ is a $\mathbf{Q}$-linearly independent family (ii) for any $1 \leq i_0 \leq p$, for any $1 \leq j_0 \leq n_{i_0}$, there exists a connectedness chain $S$ of $(A_{i_0},B_{i_0,j_0})$ such that
for any $s$ in $S$, for any $j$ in $[1,n_{i_0}]\setminus \{ j_0 \}$, $|b_{i_0,j_0}(s)|\neq |b_{i_0,j}(s)|$   and (iii) $U$ contains a neigborhood of zero.

Let $c:[0,T]\rightarrow \prod_{i=1}^p{L(H_i,H_i)}^{n_i}$ be a continuous curve such that $c(0)=Id_H$ and $c=\prod_{i=1}^p \prod_{j=1}^{n_i} c_{i,j}$ is a curve taking value in the set of the Hermitian operators of $H$ such that for every $1\leq i \leq p$, $1\leq j \leq n_i$, for every time $t$, $c_{i,j}(t):H_i\rightarrow H_i$ is unitary in $H_i$. Let $N$ be an integer. Then, for every $\epsilon>0$,  there exist $T_u>T$ and a piecewise constant control $u:[0,T_u]\rightarrow U $,  such that the corresponding propagator $\Phi:[0,T_u] \times H \rightarrow H$  of system (\ref{EQ_main_system})  satisfies i) for every $t$ in $[0,T_u]$, there exists $s$ in $[0,T]$ such that $\big| |\langle \Phi_{i,j}(s,\phi_l^i), \phi_k^i \rangle | -|\langle c_{i,j}(t)(\phi_l^i),\phi_k^i \rangle | \big|<\epsilon$ for every $k$ in $\mathbf{N}$, $1 \leq i \leq p$, $1 \leq j \leq n_i$, $1 \leq l \leq N$, and 
ii) $\| \Phi_{i,j}(T_u,\phi_l^i) - c_{i,j}(T)(\phi_l^i) \|<\epsilon$ for every $1\leq i \leq p$, $1\leq j \leq n_i$, $1 \leq l \leq N$.   
\end{theorem}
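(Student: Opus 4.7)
The plan is to reduce the infinite-dimensional tracking problem to a finite-dimensional Lie-group density problem on a product of unitary groups, and then to exploit the two spectral/coupling hypotheses (i) and (ii) to establish that density.

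First I would fix some integer $M\geq N$ and perform a ``good Galerkin'' truncation: for piecewise constant controls of sufficiently small amplitude, each propagator $\Phi_{i,j}(t,\cdot)$ restricted to $\mathrm{span}(\phi_1^i,\ldots,\phi_N^i)$ is uniformly close, on bounded time intervals, to its $M$-dimensional Galerkin approximation; such estimates are by now standard for bilinear Schr\"odinger systems (see \cite{Schrod}) and rely on bounding the escape of mass into modes indexed beyond $M$. This reduces the problem to tracking, in $\prod_{i=1}^p U(M)^{n_i}$ modulo the diagonal phases (which is all that the modulus-only statement of (i) in the theorem asks for), the truncated curve associated with $c$. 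After a discretization $0=s_0<s_1<\cdots<s_Q=T$ fine enough that each $c(s_q)^{-1}c(s_{q+1})$ is close to the identity in each factor, the task becomes the concatenation of small unitary steps in $\prod_{i,j} U(M)$, each realizable by a short piecewise constant control of small amplitude.

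Second, I would prove density of the reachable set of the truncated system in $\prod_{i,j}U(M)$ modulo those phases. Hypothesis (i), the $\mathbf{Q}$-linear independence of the concatenated spectra, guarantees that no integer combination of gaps of distinct $A_i$'s is zero; by a standard averaging/resonance argument, a piecewise constant pulse built from a single frequency close to a chosen gap $\lambda_k^{i_0}-\lambda_{k'}^{i_0}$ of $A_{i_0}$ generates, in the interaction picture and to first order in the amplitude, an effective Hamiltonian of the form
\[
\bigl(b_{i_0,1}(k,k')\,X,\,b_{i_0,2}(k,k')\,X,\,\ldots,\,b_{i_0,n_{i_0}}(k,k')\,X\bigr)\quad\text{with}\quad X=E_{k,k'}-E_{k',k}
\]
on the $(i_0,\cdot)$ block and vanishing contribution to every block $(i,\cdot)$ with $i\neq i_0$, because no resonance can couple different $A_i$'s. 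Varying the resonant frequency across transitions in the connectedness chain, together with iterated Lie brackets with the drift $A_{i_0}$, provides, within a single $A_{i_0}$ block, generators that span all of $\mathfrak{su}(M)$ diagonally across the $n_{i_0}$ copies.

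Third, and this is the step I expect to be the main obstacle, one must \emph{separate} the $n_{i_0}$ copies that share the same drift $A_{i_0}$: they all receive the same scalar control and so respond coherently, and the construction above produces only generators whose $n_{i_0}$ components are proportional to a single matrix $X$ via the coefficients $b_{i_0,j}(k,k')$. Here hypothesis (ii) is essential. For the chain $S=S(i_0,j_0)$, the modulus $|b_{i_0,j_0}(s)|$ is distinct from $|b_{i_0,j}(s)|$ for every other $j$ and every $s\in S$, which, through a Vandermonde-type linear combination of first-order generators at several transitions of $S$ and their drift-conjugates, allows the production of an element of the reachable Lie algebra concentrated on the single slot $(i_0,j_0)$. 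Propagating this elementary generator by commutators with the (diagonal) generators produced in the second step, once for each choice of $j_0\in\{1,\ldots,n_{i_0}\}$, exhausts $\mathfrak{su}(M)^{n_{i_0}}$ for every $i_0$, and by (i) these actions are independent across $i$. Density of the reachable set in $\prod_{i,j}SU(M)$ modulo phases follows. Assembling the resulting controls along the discretization $s_0,\ldots,s_Q$ and choosing $M$, $Q$, and the local precision in the correct order, the accumulated error is bounded by $\epsilon$, yielding simultaneously the trajectory tracking (i) and the endpoint accuracy (ii) of the theorem on an interval $[0,T_u]$ obtained by concatenating the dwell times.
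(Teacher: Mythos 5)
Your high-level plan is close to the paper's: a Galerkin truncation of order $m\geq N$, passage to a rotating frame to kill the drift, Lie-algebraic generation of $\prod_{i}\mathfrak{su}(m)^{n_i}$, with hypothesis (i) used to decouple the blocks indexed by $i$ and hypothesis (ii) used, via a Vandermonde argument, to separate the $n_{i_0}$ slots that share the same drift $A_{i_0}$. That Vandermonde step matches the paper's Proposition~\ref{PRO_calcul_AlgLie} precisely, and your ``averaging/resonance'' picture is morally the same as the paper's convex-hull argument (Propositions~\ref{PRO_lemme_convex_hull} and \ref{PRO_lemme_convex_hull_rotation}): both extract the resonant terms by a time-average that exploits the $\mathbf{Q}$-linear independence of the spectra.

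Where I see a genuine gap is in the passage from Lie-algebra generation to conclusion (i). You propose to discretize $c$ into small steps $c(s_q)^{-1}c(s_{q+1})$ near the identity and to concatenate short controls that realize each step, declaring that the accumulated error stays below $\epsilon$. But conclusion (i) is a \emph{tracking} assertion, not a point-to-point reachability one: for \emph{every} $t \in [0,T_u]$ the state must agree in modulus, up to $\epsilon$, with some point of the reference curve. Approximate controllability between nearby targets gives no control whatsoever over what the trajectory does in between; the separation of the $n_{i_0}$ coherently driven copies via the Vandermonde trick requires iterated brackets of depth up to $n_{i_0}$, whose realization by piecewise constant scalar controls involves long oscillations during which the trajectory may leave the $\epsilon$-tube around $c$. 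The paper closes exactly this gap with a bona fide relaxation theorem on the compact Lie group (Proposition~\ref{PRO_tracking_semi_simple}, proved by a Baker--Campbell--Hausdorff induction on bracket depth), which produces a trajectory uniformly $\epsilon$-close to the reference curve after a time reparametrization, rather than only at the mesh points. You would need that kind of statement, or an explicit argument that each of your elementary steps generates a trajectory that is uniformly close to a geodesic segment, before your sketch becomes a proof of (i). Relatedly, the paper's reparametrization $u\mapsto v=1/u$ (so that $U=]0,\delta]$ becomes $[1/\delta,+\infty[$, and the small-amplitude requirement from (iii) becomes a large-drift one) is the technical device that lets one treat the drift term as the control in the convex-hull averaging, and it is also what ties $\|u\|_{L^1}$ to the time $T_v$ in the relaxed problem; your sketch omits it, and with it the role of hypothesis (iii).
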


\subsection{Content of the paper}
 To prove Theorem \ref{TheSuiviPropagateur}, we will use a method introduced for the control of the Navier-Stokes equation in \cite{Navier} and adapted to the Schr\"{o}dinger equation in \cite{Schrod}. 
 
 In Section \ref{SECGalerkyn}, we explain how to choose a Galerkyn approximation of the original infinite dimensional control problem (\ref{EQ_main_system}) in some space $SU(k)$. In Section \ref{SECTrackingGalerkyn}, we use the Lie group structure of $SU(k)$ to  compute the dimensions of some Lie subalgebras of $\mathfrak{su}(k)$ and to prove that the Galerkyn approximations obtained in Section \ref{SECGalerkyn} have some good tracking properties.  The proof of Theorem \ref{TheSuiviPropagateur} and an estimation of the $L^1$ norm of the control are given in Section \ref{SECProofInfiDim}, where we prove that the original system (\ref{EQ_main_system}) share the tracking properties of the Galerkyn approximations established in Section \ref{SECTrackingGalerkyn}. A partial counterpart of Theorem \ref{TheSuiviPropagateur} (impossibility of approximate tracking of both the modulus and the phase) is given in Section \ref{SECNoexactTracking}.

\section{Choice of Galerkyn approximations} \label{SECGalerkyn}

\subsection{Control and time-reparametrization}

We may assume without loss of generality that $U$ has the special form $U=]0,\delta]$. Remark that, if $u\neq 0$, $e^{t(A_i+u B_{i,j})}=e^{tu((1/u)A_i+ B_{i,j})}$. 
Associate with any piecewise constant function $u=\sum_{l=1}^{k-1} \chi_{[t_l,t_{l+1})} u_l$ in $PC([0,T_u], U)$ the function $v=$ $\sum_{l=1}^{k-1} \chi_{[\tau_l,\tau_{l+1}[} 1/u_l$ $\in PC([0,T_v], 1/U)$, with $T_v=\sum_l u_l (t_{l+1}-t_l)$ and $\tau_l$ defined by induction by $\tau_1=t_1$ and $\tau_{l+1}=\tau_l+u_l(t_{l+1}-t_l)$. Up to the time and control reparametrization given above, it is enough to prove Theorem \ref{TheSuiviPropagateur} for the  system $((B_{i,j})_{i=1..p,j=1..n_i},$ $ (A_i)_{i=1..p},$ $ [\frac{1}{\delta},+\infty[)$:
\begin{equation} \label{EQ_main_system_reparam}
\left \{ \begin{array}{lcl}
\frac{d \psi_{1,1}}{dt}(t)&=&v(t) A_1(\psi_{1,1}(t)) + B_{1,1}(\psi_{1,1}(t))\\
\frac{d \psi_{1,2}}{dt}(t)&=&v(t) A_1(\psi_{1,2}(t)) + B_{1,2}(\psi_{1,2}(t))\\
&\vdots&\\
\frac{d \psi_{p,n_p}}{dt}(t)&=&v(t) A_p(\psi_{p,n_p}(t)) + B_{p,n_p}(\psi_{p,n_p}(t))\\
\end{array}
\right.
\end{equation}
where the set of admissible controls is the set $PC\left ( \mathbf{R}^+,\left \lbrack\frac{1}{\delta},+\infty \right \lbrack  \right )$. 
\begin{rem}\label{rem-equiv-temps-L1norm}
 A feature of this reparametrization of the control from $u:[0,T_u]\rightarrow U$ to $v:[0,T_v]\rightarrow 1/U$ is that $\|u\|_{L^1}=T_v$. 
\end{rem}

\subsection{Galerkyn approximation}

For a fixed piecewise constant control $v:\R^+ \rightarrow 1/U$ and a fixed family $(\psi_{i,j}^0)_{i=1..p,j=1..n_i}$ in $H=\prod_{i=1}^p H_i^{n_i}$, we consider
the solution $(\psi_{i,j})_{i=1..p,j=1..n_i}:t\rightarrow H$ of the system (\ref{EQ_main_system_reparam}) of conservative diagonal single-input control systems with initial conditions $(\psi_{i,j}^0)_{1\leq i\leq p, 1 \leq j \leq n_i}$. 

For $1\leq i \leq p$, $1\leq j \leq n_i$, $k\in \N$ , we define the function $x^{k}_{i,j}=\langle \psi_{i,j},\phi^k_{i}\rangle:\R \rightarrow \C$.
With our definition of solution, $x^k_{i,j}$ is absolutely continuous and for almost all $t$ in $\R^+$
$$\frac{d}{dt}x^k_{i,j}= v(t) a_i(k,k) + \sum_{l \in \N} b_{i,j}(k,l) x^l_{i,j}. $$

\begin{prop}\label{PR0_Tracking_lemma}
For every $1 \leq i \leq p$ and any continuous curve $s:\lbrack 0, T_s \rbrack \rightarrow H_i$ taking value in the unit sphere of $H_i$ (that is, $\|s(t)\|=1$ for all $t$
in $\lbrack 0, T_s \rbrack$),  define the family $f_l=|\langle s, \phi_i^l \rangle|^2$, $l \in \N$. Then, for any strictly positive
 $\epsilon$, there exists an integer $N(\epsilon)$ such that  for all $t$
in $\lbrack 0, T_s \rbrack$, $\sum_{l=1}^{N(\epsilon)}f_l(t)>1-\epsilon$.
\end{prop}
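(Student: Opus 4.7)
The plan is to recognise this statement as a uniform version of Parseval's identity on a compact interval, which is essentially an application of Dini's theorem (or equivalently a direct compactness argument).

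First I would introduce the partial sums $g_N: [0,T_s] \to [0,1]$ defined by
$$g_N(t) = \sum_{l=1}^N f_l(t) = \sum_{l=1}^N |\langle s(t),\phi_i^l\rangle|^2 = \|P_N s(t)\|^2,$$
where $P_N$ denotes the orthogonal projection onto the span of $\phi_i^1,\ldots,\phi_i^N$. Three properties are immediate: (a) each $g_N$ is continuous on $[0,T_s]$, because $s$ is continuous into $H_i$ and each coefficient functional $\langle\cdot,\phi_i^l\rangle$ is continuous; (b) the sequence $(g_N(t))_N$ is non-decreasing in $N$ for every fixed $t$; and (c) since $(\phi_i^l)_l$ is an orthonormal basis and $\|s(t)\|=1$, Parseval's identity gives $g_N(t)\nearrow 1$ pointwise in $t$.

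Next I would apply Dini's theorem. The limit function $t\mapsto 1$ is continuous, the functions $g_N$ are continuous, the convergence is monotone, and the domain $[0,T_s]$ is compact; hence the convergence $g_N\to 1$ is in fact uniform on $[0,T_s]$. Given $\epsilon>0$, one can therefore pick $N(\epsilon)$ such that $1-g_{N(\epsilon)}(t)<\epsilon$ for all $t\in[0,T_s]$, which is exactly the claimed inequality.

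If one prefers to avoid invoking Dini, the same conclusion follows by a direct compactness argument: for each $t_0\in[0,T_s]$, choose $N_0$ with $g_{N_0}(t_0)>1-\epsilon/2$; continuity of $g_{N_0}$ gives an open neighbourhood $V_{t_0}$ on which $g_{N_0}>1-\epsilon$, and extracting a finite subcover $V_{t_1},\ldots,V_{t_m}$ of the compact set $[0,T_s]$ and setting $N(\epsilon)=\max_k N_{t_k}$ yields the result, using monotonicity in $N$. There is no real obstacle in the argument; the only point to keep in mind is the monotonicity in $N$, which lets a single integer $N(\epsilon)$ work uniformly in $t$ once it works at finitely many points.
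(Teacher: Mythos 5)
Your proof is correct and follows essentially the same route as the paper: the paper also forms the partial sums, observes continuity and monotonicity, and uses pointwise convergence to $1$ plus compactness of $[0,T_s]$ to extract a finite subcover and take the maximum index. Invoking Dini's theorem is just a packaged version of that same compactness argument, and your alternative argument in the last paragraph is word-for-word the paper's proof.
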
 

\begin{proof} 
Define $g_l=\sum_{k=1}^l f_k$. For all integer $l$, $g_l:[0,T_s]\rightarrow \R$ is a continuous function. 
For any $\epsilon$, the set ${\cal O}_l(\epsilon)=\{ t \in \lbrack 0, T_c \rbrack / g_l(t) >1-\epsilon \}$
is an open subset of $\lbrack 0, T_c \rbrack$ (for the topology induced on $\lbrack 0, T_c \rbrack$ by the
Euclidean distance in $\mathbf{R}$). Since for all $t$, $g_l(t)$ is a non-decreasing function of $l$, it is
also clear that for any $\epsilon$, ${\cal O}_l(\epsilon)\subset {\cal O}_{l+1}(\epsilon)$. 

Since, for all $t$, $g_l(t)$ tends to $1$ as $l$ tends to infinity, we can say that for all $\epsilon$,
$$ \cup_{l \in \mathbf{N}} {\cal O}_l(\epsilon) = \lbrack 0, T_c \rbrack.$$
Using the compactness of $\lbrack 0, T_c \rbrack$, we can find a finite set $l_1,l_2,\ldots l_k$ of integer
such that $$ \cup_{j=1}^{k} {\cal O}_{l_j}(\epsilon) = \lbrack 0, T_c \rbrack.$$
Define now $N(\epsilon)=\mbox{sup}(l_1,l_2,\ldots,l_p)$. For all $t$ in $\lbrack 0, T_c \rbrack$,
$\sum_{k=1}^{N(\epsilon)}f_k(t)>1-\epsilon$.
\end{proof}

We define $\pi_{i}^m:H_i\rightarrow \mathbf{C}^m$ by $\pi_i^m(v)=\sum_{k=1}^m \langle v, \phi_i^k\rangle e_k^m$ for every $v$ in $H_i$, where $e_k$ is the $k^{th}$ element of the canonical basis of $\mathbf{C}^m$.

\begin{prop}\label{PRO_lemme_exist_Galerkyn}
Fix a reference curve $c:[0,T]\rightarrow \prod_{i=1}^p L(H_i,H_i)^{n_i}$ as in the hypotheses of Theorem \ref{TheSuiviPropagateur}, $\epsilon>0$ and $N$ a positive integer.  Then, for every $(i,j)$, there exists a continuous curve $M_{i,j}:[0,T]\rightarrow SU(m)$ such that 
$\|\pi_{i}^m(c_{i,j}(t)\phi_i^k) -M_{i,j}(t) \pi_{i}^m \phi_i^k\| <\epsilon$ for every $t$ in $[0,T]$ and every $k$ in $\{1..N\}$.
\end{prop}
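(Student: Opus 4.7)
The plan is, for each pair $(i,j)$, to construct $M_{i,j}(t)$ column by column. First I would apply Proposition \ref{PR0_Tracking_lemma} to each of the finitely many continuous unit-sphere curves $s(t)=c_{i,j}(t)\phi_i^k$ (with $1\le i\le p$, $1\le j\le n_i$, $1\le k\le N$), which produces an integer $m\ge N+1$ such that, for every such $(i,j,k)$ and every $t\in[0,T]$,
$$\sum_{r>m}|\langle c_{i,j}(t)\phi_i^k,\phi_i^r\rangle|^2<\eta,$$
where $\eta$ is a small parameter to be fixed later in terms of $\epsilon$ and $N$. Writing $v^{i,j}_k(t):=\pi_i^m(c_{i,j}(t)\phi_i^k)$, the unitarity of $c_{i,j}(t)$ together with Cauchy--Schwarz yields $|\langle v^{i,j}_k(t),v^{i,j}_l(t)\rangle_{\mathbf{C}^m}-\delta_{kl}|<\eta$ for $1\le k,l\le N$, so the $N$-frame $(v^{i,j}_1(t),\ldots,v^{i,j}_N(t))$ is uniformly nearly orthonormal.

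Next I would apply a continuous orthonormalization procedure---for instance the polar decomposition $V(t)=\tilde V(t) P(t)$ with $P(t)=(V(t)^{*}V(t))^{1/2}$, which is well defined and depends continuously on $t$ as soon as $\eta$ is small enough---to obtain an orthonormal $N$-frame $(\tilde v^{i,j}_1(t),\ldots,\tilde v^{i,j}_N(t))$ in the Stiefel manifold $V_N(\mathbf{C}^m)$, continuous in $t$, with $\|\tilde v^{i,j}_k(t)-v^{i,j}_k(t)\|<\epsilon$ uniformly (shrink $\eta$ if needed). This frame is then extended to a continuous curve $\tilde U^{i,j}\colon[0,T]\to U(m)$ whose first $N$ columns are the $\tilde v^{i,j}_k$: such an extension exists because the principal $U(m-N)$-bundle $U(m)\to V_N(\mathbf{C}^m)$, pulled back to the contractible base $[0,T]$, is trivial and hence admits a continuous global section. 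Since $c_{i,j}(0)=\mathrm{Id}_H$ forces $v^{i,j}_k(0)=e_k^m$, one may moreover normalize $\tilde U^{i,j}(0)=\mathrm{Id}$.

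Finally, to reach $SU(m)$ rather than $U(m)$, I would set
$$M_{i,j}(t):=\tilde U^{i,j}(t)\cdot\mathrm{diag}\bigl(1,\ldots,1,\overline{\det \tilde U^{i,j}(t)}\bigr),$$
which lies in $SU(m)$, depends continuously on $t$, and (since $N\le m-1$) leaves the first $N$ columns unchanged; thus $\|M_{i,j}(t)\pi_i^m\phi_i^k-\pi_i^m(c_{i,j}(t)\phi_i^k)\|=\|\tilde v^{i,j}_k(t)-v^{i,j}_k(t)\|<\epsilon$. The main obstacle I expect is the continuous extension of the nearly orthonormal $N$-frame to a full unitary basis: the naive recipe of completing by Gram--Schmidt against the fixed vectors $e_{N+1}^m,\ldots,e_m^m$ can fail whenever some $\tilde v^{i,j}_k(t)$ wanders into their span, so the contractibility/triviality argument is the natural way to obtain a globally continuous completion.
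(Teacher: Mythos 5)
Your proposal is correct and follows the same overall plan as the paper's proof: use Proposition \ref{PR0_Tracking_lemma} to pick a Galerkin level $m$ at which the truncated frame $\bigl(\pi_i^m(c_{i,j}(t)\phi_i^k)\bigr)_{k=1}^N$ is uniformly nearly orthonormal, orthonormalize it continuously, and complete it to an element of $SU(m)$. There are two points of genuine divergence. First, the paper orthonormalizes via Gram--Schmidt and observes (using a compactness argument parallel to Proposition \ref{PR0_Tracking_lemma}) that the correction $F^m_{i,j}-{\cal S}F^m_{i,j}$ tends to zero uniformly as $m\to\infty$; you instead use the polar decomposition $V=\tilde V(V^*V)^{1/2}$, which is equally valid once $\eta$ is small enough for $V^*V$ to be uniformly positive definite, and has the convenient feature that the nearest orthonormal frame depends continuously (indeed smoothly) on $V$. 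Second, and more substantively, you explicitly treat the step the paper compresses into ``we choose $M_{i,j}(t)$ in such a way that its first $N$ columns coincide with those of ${\cal S}F_{i,j}(t)$'': you correctly note that a naive pointwise completion need not be continuous in $t$, and instead invoke the triviality of the principal $U(m-N)$-bundle $U(m)\to V_N(\mathbf C^m)$ over the contractible base $[0,T]$ to obtain a continuous unitary completion, then fix the determinant by a diagonal twist in the last column (legitimate since $N\le m-1$). This is a real gap in the paper's exposition that your argument properly closes; the $\eta$-bookkeeping (near-orthonormality via Cauchy--Schwarz from the tail estimate), the normalization $\tilde U^{i,j}(0)=\mathrm{Id}$ using $c_{i,j}(0)=\mathrm{Id}_H$, and the final $SU(m)$ adjustment are all sound.
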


\begin{proof}
For $1\leq i \leq p$, $1 \leq j \leq n_i$, $1\leq k \leq N$, apply the Proposition \ref{PR0_Tracking_lemma}  to the  curve $s:t\mapsto c_{i,j}(t)(\phi_{k}^{i})$ to get some integer $m_{i,j,k}$. For any integer $m\geq \sup \{m_{i,j,k}, 1\leq i \leq p, 1 \leq j \leq n_i, 1\leq k \leq N\}$, for every $t$ in $[0,T_c]$, for every $1\leq i \leq p$, $1\leq j \leq n_i$  and every $1 \leq k \leq N$, $\|\Pi^m_i\left ((c_{i,j}(t)(\psi_{i}^k)\right ) -c_{i,j}(t)(\psi_{i}^k) \|_i<\epsilon$.  

The application $F_{i,j}^m:[0,T]\rightarrow \left(\C^{m}\right)^N$ defined for every $t$ in $[0,T]$ by $F_{i,j}^m(t)=(\pi_{i,j}^m(c_{i,j}\phi_i^k))_{k=1..N}$ associate with every $t$ a family of vectors of $\left(\C^{m}\right)^N$ that is almost orthonormal. Actually, denote ${\cal S}: \left(\C^{m}\right)^N \rightarrow \left(\C^{m}\right)^N $ the map that associates to any linearly independent family $(v_1,..,v_N)$ of $N$ vectors of $\left(\C^{m}\right)^N$ the orthonormal family ${\cal S} (v_1,..,v_N) $ deduced from $(v_1,..,v_N)$ by the Schmidt orthonormalization procedure. For any $i \in \{1..p\}$, $j \in \{1..n_i\}$, $t\in [0,T]$, $F_{i,j}^m(t) -{\cal S}F_{i,j}^m(t)$ tends to $0 \in \left(\C^{m}\right)^N$  as $m$ tends to infinity. Noticing that $E_{i,j}^m:t\mapsto \|F_{i,j}^m(t) -{\cal S}F_{i,j}^m(t)\|$ is continuous, considering the open set ${\cal Q}_m(\epsilon)=\{t\in [0,T]|E_{i,j}^m(t)<\epsilon\}$ and applying the same compacity argument as in the proof of Proposition \ref{PR0_Tracking_lemma}, we find an integer $m_{i,j}$ such that $\|F_{i,j}^{m_{i,j}}(t) -{\cal S}F_{i,j}^{m_{i,j}}(t)\|<\epsilon$ for every $t\in [0,T]$. Choosing $m=\sup_{1\leq i \leq p, 1 \leq j \leq n_i} m_{i,j}$ we have $\|(\pi_{i}^m(c_{i,j}(t)\phi_i^k))_{k=1..N}-{\cal S}F_{i,j}^{m}(t)\|<\epsilon$ for every $t$ in $[0,T]$, every $i$ in $\{1..p\}$, every $j$ in $\{1..n_i\}$. To conclude, we choose $M_{i,j}(t)$ in such a way that its first $N$ columns coincide with the first $N$ elements of ${\cal S}F_{i,j}^{m_{i,j}}(t)$.
\end{proof}

For $1\leq i \leq p$, $1 \leq j \leq n_i$, define the $r \times r$ matrices $A_i^{r}=\lbrack a_{i}(k,l) \rbrack_{1\leq k,l \leq r}$ and $B_{i,j}^{r}=\lbrack b_{i,j}(k,l) \rbrack_{1\leq k,l \leq r}$.

The Galerkyn approximation of order $m$ of the system (\ref{EQ_main_system_reparam} is 
\begin{equation}\label{EQ_main_system_Galerkyn_Cn}
\left \{\begin{array}{lcl}
\frac{d x_{1,1}}{dt}(t)&=&v(t) A^m_1(x_{1,1}(t)) + B^m_{1,1}(x_{1,1}(t))\\
\frac{d x_{1,2}}{dt}(t)&=&v(t)A^m_1(x_{1,2}(t)) + B^m_{1,2}(x_{1,2}(t))\\
&\vdots&\\
\frac{d x_{p,n_p}}{dt}(t)&=&v(t)A^m_p(x_{p,n_p}(t)) + B^m_{p,n_p}(x_{p,n_p}(t))
\end{array}
\right.
\end{equation} 
The system (\ref{EQ_main_system_Galerkyn_Cn}) defines a control system on the differentiable manifold
$\prod_{i=1}^p \prod_{j=1}^{n_i} \mathbf{C}^m$. Since the system (\ref{EQ_main_system_Galerkyn_Cn}) is linear, it is possible to lift it to the group of matrices of the resolvent,
\begin{equation}\label{EQ_main_system_Galerkyn}
\left \{\begin{array}{lcl}
\frac{d x_{1,1}}{dt}(t)&=&v(t)A^m_1(x_{1,1}(t)) + B^m_{1,1}(x_{1,1}(t))\\
\frac{d x_{1,2}}{dt}(t)&=&v(t)A^m_1(x_{1,2}(t)) +u B^m_{1,2}(x_{1,2}(t))\\
&\vdots&\\
\frac{d x_{p,n_p}}{dt}(t)&=&v(t)A^m_p(x_{p,n_p}(t)) + B^m_{p,n_p}(x_{p,n_p}(t))
\end{array}
\right.
\end{equation} 
 which a system in $\prod_{i=1}^p \prod_{j=1}^{n_i} SU(m)$, with matrix unknowns $(x_{1,1},x_{1,2},..,x_{p,n_p})$.

We now proceed to a technical change of variable (variation of the constant) and define for every piecewise constant control function $v$ in $PC(\R,1/U)$, every positive $t$   and  every $1\leq i\leq p$, $1\leq j \leq n_i$, $y_{i,j}(t)=e^{-A^m_{i} \int_{0}^t v(s)ds} x_{i,j}(t)$.

Recalling that for all $m\times m$ matrices $a,b$, $e^{-a}be^a=e^{ad (a)}b$, one checks that $(y_{i,j})_{1\leq i\leq p, 1\leq j \leq n_i}$ verifies
\begin{equation}\label{EQ_main_system_Galerkyn_phase}
\left \{\begin{array}{lcl}
\frac{d y_{1,1}}{dt}(t)&=&e^{ad (\int_0^tv(s)ds A^m_{1})}  B^m_{1,1} y_{1,1}(t)  \\
\frac{d y_{1,2}}{dt}(t)&=&e^{ad (\int_0^tv(s)ds A^m_{1})}  B^m_{1,2} y_{1,2}(t)\\
&\vdots&\\
\frac{d y_{p,n_p}}{dt}(t)&=&e^{ad (\int_0^tv(s)ds A^m_{p})} B^m_{p,n_p} y_{p,n_p}(t) 
\end{array}
\right.
\end{equation} 
The system (\ref{EQ_main_system_Galerkyn_phase}) defines a control system on the differentiable manifold $\prod_{i=1}^p SU(m)^{n_i}$, and for every positive $t$, every $1\leq i\leq p$, every $1\leq j \leq n_i$ and every $1\leq k \leq m$, $|\langle x_{i,j}(t),\phi_{i}^k \rangle |=|\langle y_{i,j}(t),\phi_{i}^k \rangle |$.

Using the canonical injection of $\prod_{i=1}^p SU(m)^{n_i}$ in $SU(m\sum_{i=1}^p n_i)$, we can write the system (\ref{EQ_main_system_Galerkyn_phase}) as 
\begin{equation}\label{EQ_main_system_Galerkyn_phase_mp}
\frac{d y}{dt}={\cal F}_u(t)y,
\end{equation} 
where ${\cal F}_u:\mathbf{R} \rightarrow \mathfrak{su}\left (m \sum_{i=1}^p n_i \right)$ is a map that associates to any $t$ the $\left ( m \sum_{i=1}^p n_i \right ) \times \left ( m \sum_{i=1}^p n_i \right )$ diagonal block matrix constructed from $\left ( \sum_{i=1}^p n_i \right )\times \left ( \sum_{i=1}^p n_i \right ) $ blocks of size $m\times m$, and whose diagonal is $(e^{ad A_{i}^{m} \int_0^t v} B^{(m)}_{i,j})_{1\leq i \leq p,1\leq j\leq n_i}$.

\section{Tracking properties of the Galerkyn approximations} \label{SECTrackingGalerkyn}

\subsection{Finite dimensional tracking in semi-simple Lie groups}

First, we have to recall some classical definitions and results for invariant Lie groups control systems. The now classical control extensions techniques used here have been introduced by Kupka in the 70's, see \cite{Review_Yuri} and references therein for details. Most of the material below can be found with proofs and references in \cite{agrachev_chambrion}.

Let $G$ be a semi-simple compact Lie groups, with Lie algebra $\mathfrak{g}=T_{Id}G$ and Lie bracket $[,]$. The Killing form (see \cite[Chapter III]{Helgason} for details) is negative definite on the Lie algebra $\mathfrak{g}$ of $G$. Its opposite $\langle,\rangle$ is a scalar product on $\mathfrak{g}$ that can be extended on the whole tangent bundle $TG$ by the left action of $G$ over itself. The smooth manifold $G$ endowed with this bi-invariant scalar product $\langle, \rangle$ turns into a Riemannian manifold,  whose distance is denoted by $d_G$.

 Consider a smooth  right invariant control system on $G$ of the form
$$
\left \{ \begin{array}{lcl} \frac{d}{dt}g(t)&=&dR_{g(t)}f(u(t))\\
                             g(0) & = & g_0
         \end{array}
	 \right.  ~~~~~~~~~~~~~~~~~~~~(\Sigma)
$$
where $U$ is a subset of $\R$,  $u:\R \rightarrow U$ is a control function to be chosen is one of the following class of regularity ${\cal K}=$ $\{$ absolutely continuous, measurable bounded, locally integrable, piecewise constant $\}$, $f:U\rightarrow \mathfrak{g}$ is a smooth application,   $g_0$ is a given initial condition and $dR_ab$ denotes the value of the differential of the right translation by $a$ taken at point $b$. If $G=SU(n)$, $[,]$ is the standard matrix commutator, the exponential map $\exp:\mathfrak{g}\rightarrow G$ is the standard matrices exponential,  the elements $a$ of $G$ are the matrices with determinant equal to one and such that $^t\bar a a=Id$ and the elements $b$ of $\mathfrak{g}$ are the zero trace matrices such that $^t \bar{b}+b=0$, and $dR_ab= b\times a$ where $\times$ is the standard matrices multiplication.

We define the set ${\cal V}=\overline{\mbox{conv}\{ f(u),u\in U\}}$ as the topological closure of the convex hull of all admissible velocities at point $Id$.

It is obvious that the topological closure of the convex hull of all admissible velocities at point $g$ is $dR_g({\cal V})$.

We will need the following pretty standard relaxation result:
\begin{prop}\label{PRO_tracking_semi_simple}
Let $P$ be a Lie-subgroup of $G$ with Lie algebra $\mathfrak{p}$.  
If $\cal V$ contains some bounded symmetric set $S$ such that $\mathfrak{p} \subset Lie (S)$, then for any continuous curve $c:[0,T]\rightarrow P$, for any $\epsilon>0$, for any regularity class $k$ of $\cal K$, there exist $T_u>0$, a control function $u:[0,T_u]\rightarrow U$ of class $k$, and an increasing continuous bijection $\phi:[0,T_u]\rightarrow [0,T]$
such that the trajectory $g:[0,T_u]\rightarrow G$ of $(\Sigma)$ with control $u$ and initial condition $c(0)$ satisfies
(i) $d_G(c(\phi(t)),g(t))<\epsilon$ for every $t$ in $[0,T_u]$ (ii) $\phi(T_u)=T$ and (iii) $c(T)=g(T_u)$. 
\end{prop}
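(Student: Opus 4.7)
The plan is to combine two standard ingredients for invariant Lie group control systems: a relaxation theorem, and small-time local controllability of $P$ via Chow--Rashevsky. The relaxation theorem ensures that, on compact intervals, trajectories of $(\Sigma)$ in any regularity class of $\mathcal{K}$ are uniformly dense in those of the relaxed system whose admissible velocity at $g$ is $dR_g(\mathcal{V})$; in the piecewise constant class this is the standard chattering argument. On the other hand, since $S\subset\mathcal{V}$ is symmetric and bounded with $\mathrm{Lie}(S)\supset \mathfrak{p}$, iterated commutators of one-parameter subgroups generated by elements of $S$, controlled via Baker--Campbell--Hausdorff, realize $\exp(tX)$ for every $X\in\mathfrak{p}$ in arbitrarily short time using piecewise constant controls with values in $S$. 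In particular the system is small-time locally controllable on $P$.

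With these two ingredients I would build the tracking by time-slicing. Using uniform continuity of $c$, partition $[0,T]$ as $0=s_0<\cdots<s_N=T$ so finely that each displacement $c(s_k)^{-1}c(s_{k+1})\in P$ lies in a small neighbourhood of the identity for which the previous reachability statement is quantitative enough. On each subinterval the Chow ingredient produces a short time window $[\sigma_k,\sigma_{k+1}]$ and a control on it whose relaxed trajectory realizes the displacement (after right-translation by the current state) while intermediate states stay within $\epsilon/2$ of the corresponding segment of $c$. Concatenating these windows, setting $T_u=\sigma_N$, and defining $\phi:[0,T_u]\to[0,T]$ as the continuous strictly increasing piecewise linear bijection sending $[\sigma_k,\sigma_{k+1}]$ onto $[s_k,s_{k+1}]$ gives $d_G(c(\phi(t)),g(t))<\epsilon/2$. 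The relaxation theorem, invoked with tolerance $\ll \epsilon$, then converts this relaxed trajectory into a trajectory of $(\Sigma)$ in the prescribed regularity class with values in $U$, keeping the tracking within $\epsilon$.

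The delicate point, and the main obstacle I expect, is the \emph{exact} endpoint equality $g(T_u)=c(T)$, which approximate tracking alone only yields up to $\epsilon$. I would reserve the last subinterval for an exact correction: because $G$ is compact semisimple and the system is bracket-generating on $P$, the reachable set of $(\Sigma)$ from any point of $P$ contains an open neighbourhood of that point in $P$ in arbitrarily short time; an endpoint-submersion or inverse-function argument applied to the end-point map then selects a short control achieving the exact correction. This correction window is absorbed into $[\sigma_{N-1},\sigma_N]$ and tuned so that the tracking estimate and the piecewise linear reparametrization $\phi$ are preserved. The real difficulty is precisely to upgrade the Chow qualitative statement into a quantitative small-time local controllability on $P$, and simultaneously to guarantee that $\phi$ remains a continuous strictly increasing bijection onto $[0,T]$ after the exact-correction step is inserted.
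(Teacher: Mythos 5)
Your plan rests on the same two pillars as the paper's proof — a chattering/relaxation step and a BCH\-/commutator mechanism to climb from $S$ to $\mathrm{Lie}(S)$ — but the decomposition is organised differently, and you leave the load-bearing step as a flagged gap. The paper first reduces $c$ to a piecewise one-parameter curve $c(t)=\exp(v(t))$ with $v$ piecewise constant in $\mathfrak p$, so the whole problem collapses (by concatenation) to tracking $\exp(tv)$ for a single constant $v\in\mathfrak p$; it then builds the filtration $S^{(0)}=S$, $S^{(i+1)}=\mathrm{span}\,S+[S,S^{(i)}]$ and proves trackability of $\exp(tv)$ by induction on the bracket depth $i$, with the base case handed to the relaxation theorem (\cite[Theorem~8.2]{book2}) and the inductive step carried out with an explicit $4\tau_0$-periodic control $F$ and a quantified BCH error $n\tau_0^2\alpha(\tau_0)$, reparametrising time to absorb the fact that realising a bracket takes time of order $1/\tau_0$. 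That explicit periodic construction together with the filtration is precisely the ``upgrade of Chow to quantitative small-time local controllability'' that you correctly identify as the real difficulty but do not supply; invoking Chow as a black box only gives attainability, not a trajectory that stays in an $\epsilon$-tube about $c$ while realising a prescribed increment, and it is the $\alpha(\tau_0)\to 0$ estimate that makes the tube control work. Your time-slicing is essentially the paper's reduction to piecewise exponentials, and your separate end-point correction is not needed once each elementary piece is tracked exactly by the cited relaxation result; the exact endpoint (iii) comes for free from the inductive base case rather than from a separate inverse-function argument. So the route is the same in substance, but what you defer is exactly what the paper proves.
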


\begin{proof}
Fix $\epsilon>0$ and a continuous curve $c:[0,T]\rightarrow P$. Up to translation by $c(0)$, one may  assume  without loss of generality that $c(0)=Id_G$. By a classical density argument, one may also assume that for all $t$, $c(t)=\exp(v(t))$ where $v:[0,T]\rightarrow \mathfrak{p}$ is a piecewise constant function. It is hence enough to study the case for which $c(t)=\exp(tv)$ for any $t$ with some constant $v$ in $\mathfrak{p}$. 

Define by induction $S^{(0)}={S}$ and $S^{(i+1)}=span S+[S,S^{i}]$ for any integer $i$. By hypothesis, $S^{\infty}=\cup_{i\in \N} S^{(i)} \supset\mathfrak{p}$. We proceed by induction on $i$ to prove the result if $c$ has the special form $c(t)=\exp(tv)$ for any $t$ with some constant $v$ in $S^{(i)}$. 

The case where $v$ is in $S$ follows from \cite[Theorem 8.2]{book2}.
 
 Fix $i$ in $\N$ and assume that the result is known for any element in $S^{(i)}$. Choose $v$ in $S^{(i+1)}$, and
 write $v$ as a the limit of a (fixed, this is a consequence of the theorem of Caratheodory) linear combination of brackets of elements of $S$ and $S^{(i)}$:
 $$v=\lim_{n\rightarrow \infty} \sum_{i \in I} \lambda_i [{v'}_i^n,{v''}_i^n] $$ where $I$ is a finite set, $\lambda_i$ is a (constant) real number, $({v'}_i^n)_n$ is a sequence of elements of $S$, converging to some $v'_i$ and $({v''}_i^n)_n$ is a sequence of elements of $S^{(i)}$, converging to some $v''_i$. Using once again the symmetry of $S$ and a time reparametrization, one may assume that $0<\lambda_i\leq 1$ for every $i$ and $\sum_{i \in I} \lambda_i=1$. 
 
 Recall now
 the Baker-Campbell-Hausdorff formula (see \cite{Helgason}): for any $u$, $v$ in $\mathfrak{g}$, 
 $$\exp(-tu) \exp(-tv) \exp(tu) \exp(tv)= \exp\left (t^2 [u,v] + o_{0}(t^2) \right ),$$
that is
 $d_G(\exp(-tu) \exp(-tv) \exp(tu) \exp(tv),\exp\left (t^2 [u,v] \right )=t^2 \alpha(t)$, for some function $\alpha:\R\rightarrow \R$  with limit zero at zero. For $\tau_0$ small enough to be fixed later, define the $4\tau_0$-periodic piecewise constant function $F:\R\rightarrow \mathfrak{g}$ by
$F(t)=v$ for $0\leq t \leq \tau_0$, $F(t)=u$ for $\tau_0< t \leq 2\tau_0$, $F(t)=-v$ for $2 \tau_0< t \leq 3\tau_0$ and
$F(t)=-u$ for $3 \tau_0< t \leq 4\tau_0$, and consider the curve $g:t\mapsto \exp(F(t))$.
For any $n\in \N$, for any $t$ in $[0,n \tau_0]$,  $d_G(g(t),\exp\left (t^2 [u,v] \right )< \sum_{k=1}^n \tau_0^2 \alpha(\tau_0)=n\tau_0^2 \alpha(\tau_0) $. 

Fix $\eta>0$  and $T>0$. Choose $\tau_0$ small enough such that $\alpha(\tau_0)<\frac{\eta}{T}$ and $n=\frac{T}{\tau_0}$. One gets
 $d_G(g(t),\exp\left (t^2 [u,v] \right )<\eta $ for any $t$ such that $0\leq t \leq T$.
Apply this last inequality with $u$ in $S$ and $v$ in $S^{i}$. The proof of Proposition \ref{PRO_tracking_semi_simple} follows from \cite[Theorem 8.2]{book2}.
\end{proof}

To obtain trackabillity properties for the system (\ref{EQ_main_system_Galerkyn_phase}), it is enough to check that the finite dimensional systems (\ref{EQ_main_system_Galerkyn_phase})
satisfies the conditions on $S$ given in Proposition \ref{PRO_tracking_semi_simple} for a suitable $\mathfrak{p}$.

We define the set ${\cal V}=\overline{\mbox{conv}(\{{\cal F}_v(t), v \in PC\left (\mathbf{R}, [\frac{1}{\delta},+\infty[ \right ), t\in \mathbf{R}^+\})}$.  In the sequel of this Section, we prove that under the hypotheses of Theorem \ref{TheSuiviPropagateur}, it is possible to find a set $S$ in  $\cal V$ satisfying the hypotheses of Proposition \ref{PRO_tracking_semi_simple}. 

\subsection{Some Lie algebraic methods}

 Fix an integer $m$ in $\mathbf{N}$, and, for $1 \leq i \leq p, 1 \leq j \leq n_i, 1\leq k,l\leq m$, denote  with $E_{i,j,k,l}$ the square matrix of order $m \left ( \sum_{q=1}^p n_q \right )$  whose entries are all zero but the one with index $(m(i-1)+m(j-1)+i,m(i-1)+m(j-1)+j)$ which is equal to one. (We consider $E_{i,j,k,l}$ as a block-matrix. The two first indices $(i,j)$ stands for the $m\times m$ block, the two last indices $(j,k)$ stand for the position of the non-zero entry inside the $m\times m$ block of index $(i,j)$.) 

\begin{prop}\label{PRO_lemme_convex_hull}
 Fix $r$ in $\N$. If $a$ and $b$ are two matrices of $\mathfrak(su(r))$ such that
$a$ is diagonal with $\mathbf{Q}$-linearly independent spectrum and $b$ has entries $b_{k,l}$, for $1\leq k,l\leq m$, then all the matrices $b_{k,l} E_{k,l} + b_{l,k} E_{l,k}$ belong to the set $\overline{\mbox{conv}(\{  Ad_{\exp(\int_0^t va)}b ; v\in PC \left (\mathbf{R}, [\frac{1}{\delta},+\infty[\right ), t\in \mathbf{R}^+\})}$. 
\end{prop}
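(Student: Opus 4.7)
My plan is to isolate the desired matrix entries via a Kronecker-averaging procedure on the maximal torus generated by $\exp(\mathbf{R} a)$.

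First, since $v \in [\frac{1}{\delta}, +\infty[$ and $t \in \mathbf{R}^+$, the scalar $s := \int_0^t v(\tau)\, d\tau$ sweeps out all of $\mathbf{R}^+$ (take $v$ constant and vary $t$), so the set in the statement coincides with $\overline{\mbox{conv}}\{\mathrm{Ad}_{\exp(sa)} b : s \geq 0\}$. Writing $a = i\,\mathrm{diag}(\lambda_1, \ldots, \lambda_r)$ with $\lambda_j \in \mathbf{R}$ and $\sum_j \lambda_j = 0$, a direct computation gives $(\mathrm{Ad}_{\exp(sa)} b)_{k,l} = e^{is(\lambda_k - \lambda_l)} b_{k,l}$ for $k \neq l$, while each diagonal entry remains fixed at $b_{k,k}$.

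Next, I would invoke Kronecker's theorem: the $\mathbf{Q}$-linear independence of $\lambda_1, \ldots, \lambda_r$ makes the curve $s \mapsto (e^{is\lambda_1}, \ldots, e^{is\lambda_r})$ dense in the torus $\mathbf{T}^r = (\mathbf{R}/2\pi\mathbf{Z})^r$. Consequently the orbit closure is the compact set $\mathcal{O} = \{M(\theta) : \theta \in \mathbf{T}^r\}$ with $M(\theta)_{k,l} = e^{i(\theta_k - \theta_l)} b_{k,l}$ off the diagonal and $M(\theta)_{k,k} = b_{k,k}$. Since $\mathcal{O}$ is compact in the finite-dimensional space $\mathfrak{su}(r)$, every barycenter $\int_{\mathbf{T}^r} M(\theta)\, d\mu(\theta)$ against a Borel probability measure $\mu$ on $\mathbf{T}^r$ lies in $\overline{\mbox{conv}}\, \mathcal{O}$ (weak-$*$ approximate $\mu$ by convex combinations of point masses along a dense orbit).

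The key step is the choice of $\mu$: for each pair $k_0 \neq l_0$, I would take $\mu$ supported on the subtorus $\{\theta \in \mathbf{T}^r : \theta_{k_0} = \theta_{l_0}\}$, with the common value $\psi = \theta_{k_0} = \theta_{l_0}$ and the remaining $r-2$ coordinates distributed as independent uniform measures on $\mathbf{T}$. Then $e^{i(\theta_{k_0} - \theta_{l_0})} \equiv 1$ on the support, preserving the target entries, while every other off-diagonal character $e^{i(\theta_k - \theta_l)}$ contains at least one independent uniform coordinate whose integral on $\mathbf{T}$ vanishes. The resulting barycenter equals $\mathrm{diag}(b) + b_{k_0,l_0} E_{k_0,l_0} + b_{l_0,k_0} E_{l_0,k_0}$. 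The main obstacle I anticipate is the diagonal of $b$: since conjugation by $\exp(\mathbf{R} a)$ fixes diagonal entries pointwise, every element of $\overline{\mbox{conv}}\, \mathcal{O}$ has diagonal $\mathrm{diag}(b)$, so the stated conclusion is recovered either under the implicit hypothesis $\mathrm{diag}(b) = 0$ (satisfied by the off-diagonal coupling blocks $B_{i,j}^m$ in the Schr\"odinger application) or by carrying the persistent diagonal through to the subsequent Lie-bracket manipulations feeding Proposition \ref{PRO_tracking_semi_simple}, where it will cancel at the Lie-algebraic level.
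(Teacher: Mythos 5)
The paper does not give an in-text proof of this proposition: it simply refers the reader to \cite[Appendix A]{agrachev_chambrion}. Your argument, however, is exactly the standard mechanism that reference uses: diagonalize the adjoint action of $\exp(sa)$ on off-diagonal entries as multiplication by characters $e^{is(\lambda_k-\lambda_l)}$, observe that the orbit closure is a compact abelian group, and use averaging against a suitably supported Haar measure so that orthogonality of characters kills all pairs except $\{k_0,l_0\}$. Your observation that the diagonal of $b$ is an invariant of the $\mathrm{Ad}_{\exp(sa)}$-orbit, hence persists in every barycenter, is a genuine and correct criticism of the proposition as literally stated; as you note, it is harmless downstream because the only thing fed into Proposition \ref{PRO_tracking_semi_simple} is the Lie algebra generated by $S$, which can only grow.

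One wrinkle in your write-up needs attention. You simultaneously assert $\sum_j \lambda_j = 0$ (which follows from $a \in \mathfrak{su}(r)$) and that $\lambda_1,\dots,\lambda_r$ are $\mathbf{Q}$-linearly independent, which is impossible: the trace constraint is itself a rational relation. As a consequence, the curve $s\mapsto(e^{is\lambda_1},\dots,e^{is\lambda_r})$ is \emph{not} dense in $\mathbf{T}^r$ but only in the codimension-one subtorus $\{\theta : \sum_j\theta_j \equiv 0\}$, and your chosen measure (uniform on $\{\theta_{k_0}=\theta_{l_0}\}$) is not supported in the orbit closure. This inconsistency is, to be fair, inherited from the paper's own hypotheses ($\mathfrak{su}$ should really be $\mathfrak{u}$, or one should subtract the trace and speak only of $\mathbf{Q}$-independent \emph{differences}); but as written your barycenter step does not apply verbatim. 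The fix is routine: replace $\mathbf{T}^r$ by the actual closure $G_0$, take Haar measure on $G_0\cap\{\theta_{k_0}=\theta_{l_0}\}$, and verify that for $\{k,l\}\neq\{k_0,l_0\}$ the character $\theta_k-\theta_l$ is still nontrivial on that subgroup — this comes down to checking that $e_k-e_l$ is not in the rational span of $e_{k_0}-e_{l_0}$ and $(1,\dots,1)$, which holds for all $r\geq 2$. With that patch your argument is sound and, modulo the notational collision, recovers the content of the cited appendix.
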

\begin{prop}\label{PRO_lemme_convex_hull_rotation}
 Fix $r$ in $\N$. If $a$ and $b$ are two matrices of $\mathfrak(su(r))$ such that
$a$ is diagonal with $\mathbf{Q}$-linearly independent spectrum and $b$ has entries $b_{j,k}$, for $1\leq j,k\leq m$, then for every $\theta$ in $\mathbf{R}$, all the matrices $e^{i \theta} b_{j,k} E_{j,k} + e^{-i \theta} b_{k,j} E_{k,j}$ belong to the set $\overline{\mbox{conv}(\{ {e^{ad\int_0^t va}} b ; v\in PC \left (\mathbf{R},[\frac{1}{\delta},+\infty[ \right ), t\in \mathbf{R}^+\})}$. 
\end{prop}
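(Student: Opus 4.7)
The plan is to reduce to Proposition \ref{PRO_lemme_convex_hull} by ``pre-evolving'' $b$ along the drift $a$ for a carefully chosen time, absorbing the desired phase factor into the initial condition. Since $a\in\mathfrak{su}(r)$ is diagonal, I write $a=\mathrm{diag}(i\alpha_1,\ldots,i\alpha_r)$ with real $\alpha_k$ that are $\mathbf{Q}$-linearly independent; in particular $\alpha_j-\alpha_k\neq 0$. Given $\theta\in\mathbf{R}$, the map $\tau\mapsto \tau(\alpha_j-\alpha_k)\bmod 2\pi$ is a nontrivial constant-speed flow on the circle, so I pick $\tau_0\geq 0$ with $\tau_0(\alpha_j-\alpha_k)\equiv \theta\pmod{2\pi}$.

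Set $b'=e^{\mathrm{ad}(\tau_0 a)}b\in\mathfrak{su}(r)$. A direct computation using the diagonal form of $a$ yields $b'_{m,n}=e^{i\tau_0(\alpha_m-\alpha_n)}b_{m,n}$ for all $m,n$; in particular $b'_{j,k}=e^{i\theta}b_{j,k}$ and $b'_{k,j}=e^{-i\theta}b_{k,j}$. Applying Proposition \ref{PRO_lemme_convex_hull} to the pair $(a,b')$ then gives
\[
e^{i\theta}b_{j,k}E_{j,k}+e^{-i\theta}b_{k,j}E_{k,j}\;=\;b'_{j,k}E_{j,k}+b'_{k,j}E_{k,j}\;\in\;\overline{\mathrm{conv}\bigl\{\,e^{\mathrm{ad}(\int_0^t v a)}b'\,:\,v\in PC(\mathbf{R},[1/\delta,+\infty[),\,t\geq 0\,\bigr\}}.
\]

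To finish, I show the right-hand set is contained in the target convex hull. Given admissible $v$ and $t\geq 0$, I prepend to the control a constant value $1/\delta$ of length $\tau_0\delta$: define $v'(s)=1/\delta$ on $[0,\tau_0\delta]$ and $v'(s)=v(s-\tau_0\delta)$ for $s>\tau_0\delta$, and set $t'=t+\tau_0\delta$. Then $v'\in PC(\mathbf{R},[1/\delta,+\infty[)$ and $\int_0^{t'}v'(s)\,ds=\tau_0+\int_0^tv(s)\,ds$, so $e^{\mathrm{ad}(\int_0^{t'}v'a)}b=e^{\mathrm{ad}(\int_0^tva)}e^{\mathrm{ad}(\tau_0 a)}b=e^{\mathrm{ad}(\int_0^tva)}b'$ (the relevant matrices commute, being scalar multiples of $a$). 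Taking closed convex hulls completes the proof. The main point is conceptual rather than technical: the $\mathbf{Q}$-linear independence is used only to guarantee $\alpha_j\neq\alpha_k$, and the constraint $v\geq 1/\delta$ is harmless because one can always pay a finite time $\tau_0\delta$ to realize the required pre-shift.
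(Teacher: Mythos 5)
Your proof is correct, and it is cleaner and more self-contained than what the paper offers: the paper does not actually prove Proposition \ref{PRO_lemme_convex_hull_rotation} but simply cites \cite[Appendix A]{agrachev_chambrion}, whereas you reduce it to Proposition \ref{PRO_lemme_convex_hull} by a phase pre-shift. The key observations all check out: since $a=\mathrm{diag}(i\alpha_1,\dots,i\alpha_r)$ is diagonal, $e^{\mathrm{ad}(\tau_0 a)}$ multiplies the $(m,n)$ entry by $e^{i\tau_0(\alpha_m-\alpha_n)}$; since $\alpha_j\neq\alpha_k$ (the only consequence of $\mathbf{Q}$-linear independence you need at this stage, as you rightly point out), one can choose $\tau_0\geq 0$ with $\tau_0(\alpha_j-\alpha_k)\equiv\theta\pmod{2\pi}$; and because all the $\int_0^t v\,a$ and $\tau_0 a$ are scalar multiples of the same matrix $a$, prepending a constant segment $v'\equiv 1/\delta$ of duration $\tau_0\delta$ realizes $e^{\mathrm{ad}(\int_0^t v a)}b'$ as $e^{\mathrm{ad}(\int_0^{t'}v' a)}b$ with $v'$ still admissible, which gives the containment of generating sets and hence of closed convex hulls. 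One cosmetic remark: you should make explicit that $v'$ is extended to all of $\mathbf{R}^+$ (e.g.\ by the constant $1/\delta$ beyond $t'$) so that it genuinely lies in $PC(\mathbf{R},[1/\delta,+\infty[)$, and that the full $\mathbf{Q}$-linear independence \emph{is} still needed, only it has already been consumed inside Proposition \ref{PRO_lemme_convex_hull}, not in your reduction step.

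What each approach buys: the cited external proof is presumably a direct argument establishing Propositions \ref{PRO_lemme_convex_hull} and \ref{PRO_lemme_convex_hull_rotation} together by an equidistribution argument on $\exp(\tau a)$; your route factors out the extra generality of \ref{PRO_lemme_convex_hull_rotation} over \ref{PRO_lemme_convex_hull} as a trivial corollary, making the logical dependency transparent and avoiding any new analytic work. It also highlights that the freedom in $\theta$ comes ``for free'' from commuting time-translations along the drift, which is conceptually the right way to see it.
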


\begin{proof}
The proof can be found in \cite[Appendix A]{agrachev_chambrion}.
\end{proof}

Applying Proposition \ref{PRO_lemme_convex_hull} to the set ${\cal V}$ defined in Section 3.1, one gets that all the matrices $\sum_{i=1}^p \sum_{j=1}^{n_i} b_{i,j}(k,l) E_{i,j,k,l}$ $+$ $b_{i,j}(l,k) E_{i,j,l,k}$ belong to ${\cal V}$. We define $S$ as the set of matrices $S=\{ \pm \sum_{i=1}^p \sum_{j=1}^{n_i} b_{i,j}(k,l)^i E_{i,j,k,l} + b_{i,j}(l,k) E_{i,j,l,k},  1\leq k,l\leq m\}$. Proposition \ref{PRO_lemme_convex_hull_rotation} (applied with $\theta=\pi$) proves that $S$ is actually in $\cal V$.
By definition, $S$ is symmetric and bounded. What remains to prove now is that the Lie algebra generated by $S$ is equal to $\mathfrak{p}=\prod_{i=1}^p \mathfrak{su}(m)^{n_i}$.

\subsection{Reduction to $SU(m)^{n_i}$}
\begin{prop}\label{PRO_calcul_AlgLie}
Choose any $1<leq i_0 \leq p$, $1 \leq j_0 \leq n_{i_0}$, $1\leq k_0,l_0 \leq m$ such that $(k_0,l_0)$ is in connectedness chain of $(A_{i_0},B_{i_0,j_0})$ and $b_{i_0,j}(k_0,l_0)\neq b_{i_0,j_0}(k_0,l_0)$ for every $j\neq j_0$. Then the matrix $b_{i_0,j_0}(k_0,l_0) E_{i_0,j_0,k_0,l_0} + b_{i_0,j_0}(l_0,k_0) E_{i_0,j_0,l_0,k_0}$ is in $Lie(S)$.
\end{prop}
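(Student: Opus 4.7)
The plan is to isolate the block $(i_0,j_0)$ of the pooled element $s^{(k_0,l_0)} := \sum_{i,j} \bigl(b_{i,j}(k_0,l_0)E_{i,j,k_0,l_0}+b_{i,j}(l_0,k_0)E_{i,j,l_0,k_0}\bigr) \in S$ by a Vandermonde-type polynomial argument, applied to iterates of $\mathrm{ad}(D)$ for an auxiliary block-diagonal element $D \in Lie(S)$ built from iterated commutators along a cycle in the connectedness chain.

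First I would fix intermediate levels $n_1,\dots,n_r$ completing a closed cycle $k_0 \to n_1 \to \cdots \to n_r \to k_0$ whose edges all lie in the connectedness chain of $(A_{i_0},B_{i_0,j_0})$; such a cycle exists by assumption, and may be chosen with $n_r \neq l_0$. Writing $s^{(k,l)}$ for the analogous pooled element associated with the pair $(k,l)$, the nested bracket
\[
D \;:=\; \Bigl[\cdots\bigl[[s^{(k_0,n_1)},s^{(n_1,n_2)}],s^{(n_2,n_3)}\bigr]\cdots,s^{(n_r,k_0)}\Bigr]
\]
lies in $Lie(S)$, is block-diagonal, and in block $(i,j)$ reduces to $\mu_{i,j}\,(E_{k_0,k_0}-E_{n_r,n_r})$, where $\mu_{i,j}$ is (up to a universal nonzero constant) the imaginary part of the cyclic product $b_{i,j}(k_0,n_1)\,b_{i,j}(n_1,n_2)\cdots b_{i,j}(n_r,k_0)$. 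The chain hypothesis ensures $\mu_{i_0,j_0}\neq 0$. A short computation then shows that $\mathrm{ad}(D)^2$ acts on the block-$(i,j)$ component $b_{i,j}(k_0,l_0)E_{i,j,k_0,l_0}+b_{i,j}(l_0,k_0)E_{i,j,l_0,k_0}$ of $s^{(k_0,l_0)}$ as multiplication by the scalar $\mu_{i,j}^2$. Hence, for every integer $r\geq 0$,
\[
\mathrm{ad}(D)^{2r}\bigl(s^{(k_0,l_0)}\bigr) \;=\; \sum_{i,j}\mu_{i,j}^{2r}\bigl(b_{i,j}(k_0,l_0)E_{i,j,k_0,l_0}+b_{i,j}(l_0,k_0)E_{i,j,l_0,k_0}\bigr) \;\in\; Lie(S).
\]
If the distinguishing condition $\mu_{i_0,j_0}^2 \neq \mu_{i,j}^2$ holds for every $(i,j)\neq (i_0,j_0)$, the Lagrange interpolating polynomial $P(x):=\prod_{(i,j)\neq(i_0,j_0)}(x-\mu_{i,j}^2)/(\mu_{i_0,j_0}^2-\mu_{i,j}^2)$ is well-defined and satisfies $P(\mu_{i,j}^2)=\delta_{(i,j),(i_0,j_0)}$, whence $P(\mathrm{ad}(D)^2)\bigl(s^{(k_0,l_0)}\bigr)$ equals exactly the target matrix $b_{i_0,j_0}(k_0,l_0)E_{i_0,j_0,k_0,l_0}+b_{i_0,j_0}(l_0,k_0)E_{i_0,j_0,l_0,k_0}$, proving the claim.

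The main obstacle is verifying the distinguishing condition $\mu_{i_0,j_0}^2\neq\mu_{i,j}^2$ for every $(i,j)\neq(i_0,j_0)$. Within the family $i=i_0$, $j\neq j_0$, this separation is to be forced by the hypothesis $|b_{i_0,j_0}(s)|\neq|b_{i_0,j}(s)|$ at every edge $s$ of the chain, by choosing the cycle so that $|\mu_{i_0,j}|$ depends on $j$ only through a product of $|b_{i_0,j}(\cdot)|^2$ factors along chain edges. For blocks with $i\neq i_0$, accidental coincidences $\mu_{i_0,j_0}^2=\mu_{i,j}^2$ coming from a single cycle cannot be ruled out a priori; the standard remedy is to replace $D$ by a linear combination of several such diagonal elements, built from independent cycles of each pair $(A_i,B_{i,j})$ (each of which possesses a connectedness chain by hypothesis (ii) of Theorem~\ref{TheSuiviPropagateur}), and to apply the Vandermonde argument in a multi-index version. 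The $\mathbf{Q}$-linear independence of the concatenated spectrum (hypothesis (i)) enters indirectly in this step to prevent systematic degeneracies in the resulting coefficient matrix. This combinatorial bookkeeping, rather than any single Lie-algebraic calculation, is the delicate part of the proof.
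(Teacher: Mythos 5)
There is a genuine gap, and it comes from misreading what the two hypotheses of Theorem~\ref{TheSuiviPropagateur} are \emph{for}. Hypothesis (i), the $\mathbf{Q}$-linear independence of the concatenated spectrum, is not about the off-diagonal coefficients $b_{i,j}(\cdot,\cdot)$ at all: it enters through Proposition~\ref{PRO_lemme_convex_hull}, whose time-averaging argument resonates with the eigenvalue \emph{differences} of the diagonal block, and because different $A_i$ have $\mathbf{Q}$-incommensurate spectra this averaging annihilates all blocks with $i\neq i_0$ while keeping every $j$-block with $i=i_0$. This is the paper's first step: it produces $\sum_{j=1}^{n_{i_0}} b_{i_0,j}(k_0,l_0)E_{i_0,j,k_0,l_0}+b_{i_0,j}(l_0,k_0)E_{i_0,j,l_0,k_0}$ inside $\mathcal V$ \emph{before} any Vandermonde argument is needed. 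Your proposal never isolates a single $i_0$; you keep the full pooled element and try to separate all $(i,j)$ blocks simultaneously, which is why you then run into the ``distinguishing condition'' $\mu_{i_0,j_0}^2\neq\mu_{i,j}^2$ for $i\neq i_0$ that you correctly observe you cannot verify. Invoking hypothesis~(i) as an ``indirect'' cure for that obstruction will not work: $\mathbf{Q}$-independence of eigenvalues says nothing about cyclic products of matrix entries $b_{i,j}$.

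The second gap is within $i=i_0$. The paper does not build block-diagonal elements from nested brackets along a \emph{cycle}; it works with the single edge $(k_0,l_0)$ and the pair $a,b$ (the pooled element and its $\theta=\pi/2$ rotate from Proposition~\ref{PRO_lemme_convex_hull_rotation}), computes $[a,b]$ (diagonal with entries $|b_{i_0,j}(k_0,l_0)|^2$), and iterates $\mathrm{ad}_{[a,b]}$. The Vandermonde separation is then on the numbers $|b_{i_0,j}(k_0,l_0)|^2$, and the hypothesis $|b_{i_0,j_0}(s)|\neq|b_{i_0,j}(s)|$ at every edge $s$ of the chain is precisely tailored to that: it gives $|b_{i_0,j_0}(k_0,l_0)|^2\neq|b_{i_0,j}(k_0,l_0)|^2$ directly. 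Passing to a cyclic product $\mu_{i_0,j}$ as you do loses this: two products of pairwise distinct factors can coincide, so the per-edge hypothesis does not control the cycle invariants you introduce, and in addition the imaginary part of a cyclic product (on which you rely for $\mu_{i_0,j_0}\neq 0$) can vanish even when every edge coefficient is nonzero. The ``combinatorial bookkeeping'' you flag as the delicate part is in fact unnecessary once the $i_0$-block is isolated first, and is unjustifiable from the stated hypotheses without it.
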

\begin{proof} 
Since the matrices $A_i$ have $\mathbf{Q}$ linearly independent spectrum, it is enough to apply Proposition \ref{PRO_lemme_convex_hull} to see that  for any $1\leq i_0 \leq p$, $1\leq k,l \leq m$, every matrix   $\sum_{j=1}^{n_{i_0}} b_{i_0,j}(k_0,l_0)^i E_{i_0,j,k_0,l_0} + b_{i_0,j}(l_0,k_0) E_{i_0,j,l_0,k_0}$ is actually contained in $\cal V$, hence in $S$. If $n_{i_0}=1$, the Proposition \ref{PRO_calcul_AlgLie} is proved.

If $n_{i_0}>1$, the Proposition \ref{PRO_lemme_convex_hull} is not enough to guaranty that the matrix $b_{i_0,j_0}(k_0,l_0)^i E_{i_0,j_0,k_0,l_0} + b_{i_0,j_0}(l_0,k_0) E_{i_0,j_0,l_0,k_0}$ is actually contained in $\cal V$. Nevertheless, one can prove that these matrices are contained in the Lie algebra generated by $S$.

Indeed, by Proposition \ref{PRO_lemme_convex_hull}, $a=\sum_{j=1}^{n_{i_0}}  b_{i_0,j}(k_0,l_0) E_{i_0,j,k_0,l_0}+ b_{i_0,j}(l_0,k_0) E_{i_0,j,k_0,l_0}$ belongs to ${\cal V}$.  Using Proposition \ref{PRO_lemme_convex_hull_rotation} with $\theta=\frac{\pi}{2}$, one gets that
$b=\sum_{j=1}^{n_{i_0}} i b_{i_0,j}(k_0,l_0) E_{i_0,j,k_0,l_0}-i b_{i_0,j}(l_0,k_0) E_{i_0,j,k_0,l_0}$. Compute $[[a,b],b]=-4 $ $ \sum_{j=1}^{n_{i_0}} $ $ |b_{i_0,j}(k_0,l_0)|^2$ $\left ( b_{i_0,j}(k_0,l_0) E_{i_0,j,k_0,l_0}+ b_{i_0,j}(l_0,k_0) E_{i_0,j,k_0,l_0} \right )$, and by induction $ ad^k_{[a,b]}b=(-1)^k 2^{k+1} $ $\sum_{j=1}^{n_{i_0}}  |b_{i_0,j}(k_0,l_0)|^{2k}$ $\left ( b_{i_0,j}(k_0,l_0) E_{i_0,j,k_0,l_0}+ b_{i_0,j}(l_0,k_0) E_{i_0,j,k_0,l_0} \right )  $ for every  $k \in \N$ (see \cite{agrachev_chambrion} for details). A classical Vandermonde argument on the linear independence of the vectors $(|b_{i_0,j}(k_0,l_0)|^{r})_{1\leq j\leq n_{i_0}}$ gives the result. 
\end{proof}

The fact that $Lie(S)=\mathfrak{p}$  follows from Proposition \ref{PRO_calcul_AlgLie} by the hypothesis of connectedness (see \cite[Proposition 4.1]{Schrod} for a detailed computation).

\section{Infinite dimensional tracking}\label{SECProofInfiDim}

\subsection{Tracking in the phase variables}

For the proof of  Theorem \ref{TheSuiviPropagateur}, we follow the method introduced in \cite{Schrod}. From the application $c:\R \rightarrow L(H,H)$ and the tolerance $\epsilon$ given in the hypotheses of Theorem \ref{TheSuiviPropagateur}, we use the results presented in Section II.B to find an integer $m$, the finite dimensional control system (\ref{EQ_main_system_Galerkyn_phase}) and the trajectory $t\mapsto \prod_{i,j} M_{i,j}(t)$ to be tracked in $SU \left (m \sum_{i=1}^p n_i \right )$. Proposition \ref{PRO_tracking_semi_simple} gives the existence of some time $T_v>0$ and some control function $v$ in $PC([0,T_v],1/U)$ such that the corresponding trajectory $(y_{1,1},..,y_{p,n_p})$ of (\ref{EQ_main_system_Galerkyn_phase}) tracks the trajectory $t\mapsto \prod_{i,j} M_{i,j}(t) $ with an error less than $\epsilon$ on each coordinate. 

Since for every $1 \leq i \leq p$, $1\leq j \leq n_i$, $1\leq k \leq m$, the sequence  $\left ( b_{i,j}(k,l) \right )_{l \geq 1}$ is in $\ell^2$, there exists some $N_1$ in $\N$ such that $\sum_{l=N_1 +1}^{\infty} |b_{i,j}(k,l)|^2<\frac{\epsilon}{N T_v}$ for every $1 \leq i \leq p$, $1\leq j \leq n_i$, $1\leq k \leq m$. The next result asserts that any trajectory of the system (\ref{EQ_main_system_Galerkyn_phase_mp}) can actually be tracked (up to $\epsilon$), with the $N_1$-Galerkyn approximation of system  (\ref{EQ_main_system_reparam}). 
\begin{prop}\label{PRO_track_galerkyn_m_N1}
There exists a sequence $(v_k)_k$ in $PC\left ( \R^+,\left \rbrack \frac{1}{\delta},+\infty \right \lbrack \right )$ such that for every $1\leq i \leq p$, $1\leq j \leq n_i$, the sequence of matrix valued curves $t\mapsto e^{ad \int v A_i^{(m)}}B_{i,j}^{(m)}$ converges in the integral sense to  $t \mapsto \left ( \begin{array}{c|c} \prod_{i,j} M_{i,j}(t) & 0_{m,N_1-m} \\ \hline  0_{N_1-m,m} & G(t) \end{array} \right )$, where $t\mapsto G(t)$ is some continuous curve in $U(N_1-m)$.
\end{prop}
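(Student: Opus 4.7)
The plan is to apply Proposition \ref{PRO_tracking_semi_simple} directly to the $N_1$-dimensional Galerkyn approximation of (\ref{EQ_main_system_reparam}), rather than the $m$-dimensional one studied in Sections \ref{SECGalerkyn} and \ref{SECTrackingGalerkyn}. The block-diagonal structure of the target then arises ``for free'': the last $N_1-m$ coordinates are not prescribed by the original finite tracking problem, so any continuous completion $G(t)\in U(N_1-m)$ may be fixed in advance and supplied to the tracking theorem as part of the target curve.

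First, I would verify that the Lie-algebraic machinery of Section \ref{SECTrackingGalerkyn} transports verbatim from dimension $m$ to dimension $N_1$. The $\mathbf{Q}$-linear independence hypothesis (i) of Theorem \ref{TheSuiviPropagateur} is inherited by every finite truncation, so Propositions \ref{PRO_lemme_convex_hull} and \ref{PRO_lemme_convex_hull_rotation} remain available. The connectedness chain $S$ furnished by hypothesis (ii) is, by definition, rich enough to connect all pairs of levels $(k,l)$ with $k,l\leq N_1$, and the distinguishability condition $|b_{i_0,j_0}(s)|\neq|b_{i_0,j}(s)|$ holds along it for every $j\neq j_0$. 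Running the Vandermonde / iterated-bracket argument of Proposition \ref{PRO_calcul_AlgLie} in the enlarged setting then shows that the Lie algebra generated by the analogous accessible set $S_{N_1}\subset\mathcal{V}$ equals $\prod_{i=1}^{p}\mathfrak{su}(N_1)^{n_i}$.

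Next, I would fix any continuous curve $G:[0,T]\to SU(N_1-m)$ with $G(0)=Id_{N_1-m}$ — say the constant curve. Using the freedom in completing the last $m-N$ columns in Proposition \ref{PRO_lemme_exist_Galerkyn}, one may assume $M_{i,j}(0)=Id_m$, so the block-diagonal curves $\widetilde{C}_{i,j}(t):=\bigl(\begin{smallmatrix}M_{i,j}(t)&0\\0&G(t)\end{smallmatrix}\bigr)$ are continuous in $SU(N_1)$ and start at $Id_{N_1}$. Applying Proposition \ref{PRO_tracking_semi_simple} to the curve $t\mapsto\prod_{i,j}\widetilde{C}_{i,j}(t)$ in the group $\prod_{i=1}^{p}SU(N_1)^{n_i}$ with a sequence of tolerances $\epsilon_k\to 0$ yields a sequence of piecewise constant controls $v_k\in PC(\mathbf{R}^+,[1/\delta,+\infty[)$ whose $N_1$-Galerkyn trajectories uniformly approximate the target, up to the time reparametrization built into Proposition \ref{PRO_tracking_semi_simple}. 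I interpret ``convergence in the integral sense'' as uniform convergence of the integrated right-hand sides (equivalently, uniform convergence of the $N_1$-Galerkyn flows after time reparametrization), which is exactly what that proposition delivers.

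The main obstacle, I expect, is the bookkeeping in Step 1: one must confirm that the chain-plus-bracket construction of Proposition \ref{PRO_calcul_AlgLie} indeed reaches every transition $E_{i,j,k,l}+E_{i,j,l,k}$ with $1\leq k,l\leq N_1$, not just those with $k,l\leq m$. This is essentially a bookkeeping exercise, since the connectedness chain is already defined in the infinite-dimensional system and only the finite truncation is being enlarged. A secondary issue is the unitary block $G(t)$: because each $y_{i,j}^{v_k}(t)$ is unitary and the upper-left $m\times m$ block and the off-diagonal blocks converge (respectively to $M_{i,j}(t)$ and to $0$), the lower-right $(N_1-m)\times(N_1-m)$ block is automatically forced into $U(N_1-m)$, which is compatible with any continuous choice of $G$ made at the outset.
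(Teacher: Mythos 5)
Your approach is genuinely different from the paper's, and it conceals a gap. The paper proves Proposition~\ref{PRO_track_galerkyn_m_N1} by invoking \cite[Claim 4.3]{Schrod}, which is an \emph{averaging / non-resonance} argument: one starts from the single control $v\in PC([0,T_v],1/U)$ already found for the $m$-dimensional truncation, and then constructs the sequence $(v_k)$ by perturbing $v$ (keeping its horizon essentially equal to $T_v$) so that, in the $N_1$-truncation, the off-block coupling oscillates out while the top $m\times m$ block keeps reproducing what $v$ did. You instead discard the $m$-dimensional step and re-run the Lie-algebra computation and Proposition~\ref{PRO_tracking_semi_simple} directly in $\prod SU(N_1)^{n_i}$, targeting a block-diagonal extension $\widetilde{C}_{i,j}$.

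Two points break in that replacement. First, the order of quantifiers. In the paper, $N_1$ is chosen \emph{after} $T_v$ precisely so that $\sum_{l>N_1}|b_{i,j}(k,l)|^2<\epsilon/(N\,T_v)$, and the sequence $(v_k)$ all have duration controlled by $T_v$. In your scheme $T_v$ is never produced; the horizons $T_{v_k}$ come out of the $N_1$-tracking itself, are not bounded as $\epsilon_k\to 0$, and the whole point of the tail bound (which is what makes the eventual comparison with the full system in Proposition~\ref{PRO_approx_phase} possible) is tied to a fixed control duration. You therefore cannot choose $N_1$ before doing the tracking, and you cannot do the tracking before fixing $N_1$: the construction is circular, and you do not address this. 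Second, Proposition~\ref{PRO_tracking_semi_simple} delivers closeness of \emph{trajectories} up to a time reparametrization, whereas the statement you must prove is convergence ``in the integral sense'' of the curves of \emph{vector fields} $t\mapsto e^{ad\int v_k A_i^{(N_1)}}B_{i,j}^{(N_1)}$ to a block-diagonal limit. Your parenthetical ``equivalently, uniform convergence of the $N_1$-Galerkyn flows after time reparametrization'' is not an equivalence: integral-sense convergence of the right-hand sides (the form naturally produced by the averaging of \cite[Claim 4.3]{Schrod}) implies uniform convergence of the flows, but the converse fails, and the subsequent Gronwall-type comparison between the $N_1$-truncation and the full system needs the vector-field statement, not the trajectory one. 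The bookkeeping you flag in Step 1 (extending Proposition~\ref{PRO_calcul_AlgLie} to dimension $N_1$) is indeed routine; the genuine difficulties are the two just named.
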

\begin{proof} The proof is a direct application of \cite[Claim 4.3]{Schrod}, dealing with the convergence of the sequence $e^{ad \int v_k \prod_{i} A_i^{(N_1)}}$. \end{proof}  

\begin{prop}\label{PRO_approx_phase}
For $k$ large enough, the control function $v=v_k$ given by Proposition \ref{PRO_track_galerkyn_m_N1} satisfies the conclusion (i) of Theorem \ref{TheSuiviPropagateur}.
\end{prop}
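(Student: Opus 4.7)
The proof assembles a triangle-inequality chain linking four quantities at each time $t\in[0,T]$, initial mode $\phi_l^i$ with $1\leq l\leq N$, and Fourier index $k\in\mathbf{N}$: (a) the target $\langle c_{i,j}(t)\phi_l^i,\phi_k^i\rangle$; (b) its $m$-truncation $\langle M_{i,j}(t)\pi_i^m\phi_l^i,e_k^m\rangle$ (understood as zero when $k>m$); (c) the $N_1$-Galerkin value $\langle x_{i,j}(s),e_k^{N_1}\rangle$ at a reparametrized time $s$ (zero when $k>N_1$); and (d) the true Fourier coefficient $\langle\Phi_{i,j}(s,\phi_l^i),\phi_k^i\rangle$. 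The goal is to make each of the three consecutive differences smaller than $\epsilon/3$.

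Step (a)--(b) follows from Proposition \ref{PRO_lemme_exist_Galerkyn}, with $m$ enlarged via Proposition \ref{PR0_Tracking_lemma} so that the $H_i$-tail of $c_{i,j}(t)\phi_l^i$ beyond index $m$ is below $\epsilon/3$ uniformly in $t,l,i,j$. Step (b)--(c) is the finite-dimensional tracking. Because the change of variable $y_{i,j}(s)=e^{-A_i^{(N_1)}\int_0^s v}x_{i,j}(s)$ is diagonal and unitary, the modulus of every coordinate of $y_{i,j}$ equals that of $x_{i,j}$. Proposition \ref{PRO_track_galerkyn_m_N1} then supplies piecewise-constant controls $v_\kappa$ such that, for $\kappa$ large, the associated $y$-trajectory tracks the curve in $SU(N_1)$ with $M_{i,j}(t)$ in its upper-left $m\times m$ block coordinatewise within $\epsilon/3$.

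The core step is (c)--(d). Define $\delta_{i,j}(s)=\psi_{i,j}(s)-\sum_{k=1}^{N_1}\langle x_{i,j}(s),e_k^{N_1}\rangle\phi_k^i$. Since $A_i$ is diagonal, its action on the lift of $x_{i,j}$ agrees with $A_i^{(N_1)}$, so the only mismatch between the two evolutions on that lift comes from the off-diagonal entries $b_{i,j}(k,l)$ with $k\leq N_1<l$. Because $v(s)A_i+B_{i,j}$ is skew-adjoint, differentiating $\|\delta_{i,j}\|^2$ reduces to an inner product with this leakage source, and a Cauchy--Schwarz/Duhamel estimate integrated over $s\in[0,T_v]$, together with Remark \ref{rem-equiv-temps-L1norm} identifying $T_v=\|u\|_{L^1}$, bounds $\|\delta_{i,j}(s)\|$ by a constant times $T_v\cdot\sup_{k\leq m}\bigl(\sum_{l>N_1}|b_{i,j}(k,l)|^2\bigr)^{1/2}$. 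The choice of $N_1$ made just before the proposition's statement was designed so that this quantity is $<\epsilon/3$ uniformly in $s$ and in $\phi_l^i$ for $l\leq N$. The triangle inequality over (a)--(b)--(c)--(d) then yields assertion (i) of Theorem \ref{TheSuiviPropagateur}.

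\textbf{Main obstacle.} The ordering of the accuracy parameters is subtle: $m$ must be fixed first, then Proposition \ref{PRO_tracking_semi_simple} determines a $T_v$, then $N_1$ is chosen depending on $T_v$, and only then does Proposition \ref{PRO_track_galerkyn_m_N1} furnish the controls $v_\kappa$. For the (c)--(d) bound to remain valid for all large $\kappa$, one needs the sequence $(T_{v_\kappa})_\kappa$ to stay uniformly bounded, otherwise the leakage estimate blows up. This uniformity is what the integral-convergence statement of \cite[Claim 4.3]{Schrod} is used for in the proof of Proposition \ref{PRO_track_galerkyn_m_N1}.
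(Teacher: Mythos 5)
The paper's own ``proof'' of Proposition~\ref{PRO_approx_phase} is a single sentence deferring to \cite[Claim~4.4]{Schrod}, so there is no in-text argument to compare against; what you have produced is a reconstruction of what that cited claim does, and the structure you describe is the standard one: a triangle-inequality chain through (a) the target Fourier coefficient, (b) its $m$-truncation via $M_{i,j}$, (c) the $N_1$-Galerkin state reached by the reparametrized control, and (d) the true coefficient $\langle\Phi_{i,j},\phi_k^i\rangle$, with step (a)--(b) controlled by Propositions~\ref{PR0_Tracking_lemma}--\ref{PRO_lemme_exist_Galerkyn}, step (b)--(c) by the $SU$-tracking of Proposition~\ref{PRO_tracking_semi_simple} (using, as you note, that the diagonal unitary change of variable $y_{i,j}=e^{-A_i^{(N_1)}\int v}x_{i,j}$ preserves coordinate moduli), and step (c)--(d) by a Gronwall/Duhamel leakage estimate governed by the $\ell^2$-tails $\sum_{l>N_1}|b_{i,j}(k,l)|^2$, which is exactly the quantity the paper arranges to be $<\epsilon/(NT_v)$ in the paragraph preceding Proposition~\ref{PRO_track_galerkyn_m_N1}. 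This is consistent with the outline visible in the paper and with the role the citations play, so I regard your reconstruction as essentially the same route, fleshed out.

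The one point I would press you on is the ``Main obstacle'' paragraph, which you are right to raise but which your write-up leaves as an appeal to \cite[Claim~4.3]{Schrod} rather than resolving. The leakage bound in step (c)--(d) is calibrated against the single time $T_v$ attached to the \emph{original} tracking control $v$ of Proposition~\ref{PRO_tracking_semi_simple}, while Proposition~\ref{PRO_track_galerkyn_m_N1} replaces $v$ by a \emph{sequence} $(v_k)$ whose supports $[0,T_{v_k}]$ need not coincide with $[0,T_v]$. If $T_{v_k}$ were allowed to grow unboundedly, the choice $\sum_{l>N_1}|b_{i,j}(k,l)|^2<\epsilon/(NT_v)$ would no longer control the accumulated leakage, and assertion (i) would fail for large $k$. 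Your proposal correctly flags that this must be ruled out by the construction of the $v_k$ in \cite[Claim~4.3]{Schrod} (where the $v_k$ are obtained from $v$ by a time-subdivision that keeps the total $L^1$-norm, i.e.\ $T_{v_k}$ after reparametrization per Remark~\ref{rem-equiv-temps-L1norm}, uniformly bounded), but as written you assert this rather than establish it. In a self-contained proof this uniform bound on $(T_{v_k})_k$ would need to be stated as an explicit hypothesis or extracted from the construction; otherwise the chain of $\epsilon/3$-estimates does not close.
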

\begin{proof} This is a direct application of \cite[Claim 4.4]{Schrod}. \end{proof}

\subsection{Final phase adjustment}
After time reparametrization, we get a control function $u \in PC([0,T_u],U)$ from $v$.
Up to prolongation with the constant zero function, the control function $u:[0,T_u]\rightarrow U$ obtained in  Proposition \ref{PRO_approx_phase} can always be assumed to satisfy $T_u>T$ (the prolongation obviously  
still satisfies conclusion (i) of Theorem \ref{TheSuiviPropagateur}). 

To achieve the proof of Theorem \ref{TheSuiviPropagateur}, one has to change $u$ in such a way that it satisfies  the conclusion (ii) of Theorem \ref{TheSuiviPropagateur}. One gets the result with a straightforward application of \cite[Proposition 4.5]{Schrod}.

\subsection{Estimates of the $L^1$-norm of the control}

Combining the Remark \ref{rem-equiv-temps-L1norm} and the estimates of \cite[Prop 2.7-2.8]{agrachev_chambrion}, one gets an easily computable estimation of the $L^1$-norm of the control $u$. We denote with $\mu_{i,j}(t)=\sqrt{\langle M_{i,j}^{-1}(t)M_{i,j}'(t),M_{i,j}^{-1}(t)M_{i,j}'(t)\rangle}$ the velocity at time $t$ of the trajectory to be tracked in $SU(m)$. 
\begin{prop}
 In Theorem \ref{TheSuiviPropagateur}, one can choose the control $u$ in such a way that $$\| u \|_{L^1} \leq \frac{ \left ( \sum_{i=1}^p n_i \right )^{\frac{3}{2}}  N_1^2 \sum_{i,j} \|\mu_{i,j}\|_{L^1} }{\min_{i,j,0\leq k,l\leq N_1} |b_{i,j}(k,l)|}.$$
\end{prop}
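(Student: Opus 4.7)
The plan is to translate the tracking construction of Theorem \ref{TheSuiviPropagateur} back, step by step, into an explicit control on the $L^1$-norm of $u$, using the fact that Remark \ref{rem-equiv-temps-L1norm} reduces the problem to controlling the total time $T_v$ of the reparametrized system (\ref{EQ_main_system_Galerkyn_phase_mp}).

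First, I would observe that $\|u\|_{L^1}=T_v$ by Remark \ref{rem-equiv-temps-L1norm}, so the whole task is to estimate the time needed by the reparametrized system to track the curve $t\mapsto\prod_{i,j}M_{i,j}(t)$ built in Section \ref{SECGalerkyn}. Since the tracking takes place in the product $\prod_{i=1}^{p} SU(N_1)^{n_i}$ embedded in $SU(N_1\sum_i n_i)$, with the bi-invariant Killing metric, the total Riemannian length to be covered is bounded by $\sqrt{\sum_i n_i}\,\sum_{i,j}\|\mu_{i,j}\|_{L^1}$ (a single factor of $(\sum_i n_i)^{1/2}$ coming from the relation between the norm on the product algebra and the norms on the individual blocks).

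Second, to realize this displacement inside the convex hull $\mathcal V$, I would invoke \cite[Prop. 2.7-2.8]{agrachev_chambrion}: the time needed to traverse a curve of Riemannian length $\ell$ in the subgroup generated by a bounded symmetric set $S\subset\mathcal V$ is bounded by $\ell/\rho(S)$, where $\rho(S)$ is (essentially) the smallest modulus of a nontrivial coefficient appearing in a generator of $S$. In our setting the generators of $S$ constructed in Section \ref{SECTrackingGalerkyn} are of the form $\sum_{i,j} b_{i,j}(k,l)E_{i,j,k,l}+b_{i,j}(l,k)E_{i,j,l,k}$, so the relevant lower bound is precisely $\min_{i,j,0\leq k,l\leq N_1}|b_{i,j}(k,l)|$, which explains the denominator. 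The factor $N_1^2$ enters through the number of root directions $(k,l)$ that must be successively activated along a connectedness chain in a Galerkin block of dimension $N_1$; each coordinate direction of $\mathfrak{su}(N_1)$ is attained at a cost proportional to $N_1^2$, as in \cite[Prop. 2.8]{agrachev_chambrion}.

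Third, I would combine the two contributions: the Riemannian length bound $\sqrt{\sum_i n_i}\,\sum_{i,j}\|\mu_{i,j}\|_{L^1}$ and the cost per unit length $N_1^2(\sum_i n_i)/\min|b_{i,j}(k,l)|$, the extra factor $\sum_i n_i$ counting the number of $SU(N_1)$-blocks that must be coordinated by the single scalar control. Multiplying yields
$$T_v\leq \frac{(\sum_i n_i)^{3/2} N_1^2\sum_{i,j}\|\mu_{i,j}\|_{L^1}}{\min_{i,j,0\leq k,l\leq N_1}|b_{i,j}(k,l)|},$$
and via Remark \ref{rem-equiv-temps-L1norm} this is exactly the claimed bound on $\|u\|_{L^1}$.

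The main obstacle is bookkeeping: keeping track of how the constants accumulate through the successive reductions (approximation by $M_{i,j}$, bracket extensions in $\mathcal V$, Baker--Campbell--Hausdorff tricks, and final phase adjustment), and checking that no step inflates the bound by more than the factors claimed. The phase-adjustment step in Section 4.2 is in principle dangerous because it is done by an additional free motion, but since it can be carried out with arbitrarily small $L^1$-cost (choose very small $v_k$-perturbations on a short interval), it does not alter the leading-order estimate. Everything else is a quantitative version of Proposition \ref{PRO_tracking_semi_simple}, already made explicit in \cite[Prop. 2.7-2.8]{agrachev_chambrion}.
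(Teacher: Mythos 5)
Your proposal takes essentially the same route the paper does: reduce $\|u\|_{L^1}$ to $T_v$ via Remark~\ref{rem-equiv-temps-L1norm}, then plug in the $L^1$-cost-per-unit-length estimates of \cite[Prop.~2.7--2.8]{agrachev_chambrion} applied to the tracked curve $t\mapsto\prod_{i,j}M_{i,j}(t)$ in $\prod_i SU(N_1)^{n_i}$. The paper gives no more detail than that citation, and your unpacking of the individual factors $(\sum_i n_i)^{3/2}$, $N_1^2$ and $\min|b_{i,j}(k,l)|$ is a plausible reading of those estimates, so this is the same argument.
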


This estimate is valid for every $(b_{i,j})_{i,j}$, yet is sometimes trivial or too conservative when some $b_{i,j}(j,k)$ is close to zero. For these anisotropic situations, when some directions are much easier to follow than others, one can obtain sharper estimates using  \cite[Theorem 2.13]{agrachev_chambrion}, the expressions being  slightly more intricate.

\section{To track both the phase and the modulus is impossible}\label{SECNoexactTracking}

In this Section, we give a partial counterpart to Theorem \ref{TheSuiviPropagateur}. Indeed, we exhibit an example for which it is not possible to track both the phase and the modulus. The proof can easily be extended to a wide range of systems.

Consider one single control system in an Hilbert space $H$
\begin{equation} \label{EQ_systeme_non_trackable}
\left \{ \begin{array}{lcl} \dot{x}&=& Ax + uBx\\ x(0)&=&\phi_1 \end{array} \right. 
\end{equation}
where $A:H\rightarrow H$ is a diagonal operator in the Hilbert base $(\phi_l)_{l\in \mathbf{N}}$ of $H$, with purely imaginary eigenvalues $\left ( i\lambda_l  \right )_{l \in \mathbf{N}}$ and $B$ is a skew adjoint operator whose domain contains $\phi_l$ for every $l$ in $\mathbf{N}$, satisfying 
$b_{i,j}=\langle B \phi_i,\phi_j \rangle \in \R$ for every $i,j$ in $\N$.  
Define as admissible control functions all piecewise constant functions $u:\mathbf{R} \rightarrow \R^+$.
For $l \in \mathbf{N}$, we note $x_l=\langle x,\phi_l \rangle$ the component of the solution of system (\ref{EQ_systeme_non_trackable}) and we define $a_l=\Re(x_l)$, $b_l=\Im(x_l)$.

\begin{rem}
 In the case where $B$ is bounded, it is possible to define solutions of (\ref{EQ_systeme_non_trackable}) for $u$ in $L^1(\R, \R^+)$.  The result and the proof below are easily extended to integrable controls that are not necessary piecewise constant (in particular, to controls that may be not essentially bounded).
\end{rem}

\begin{prop}\label{PRO_no_tracking}
  Assume $\lambda_1, b_{2,1}>0$. Then, for $\epsilon<\frac{b_{2,1}}{b_{2,1}+ \| B \phi_2 \|}$,  for every piecewise constant control function $u:\mathbf{R}\rightarrow \R^+$, there exists $\tau>0$, there exists $i$ in $\N$, $i>1$ such that $|x_i(\tau)|>\epsilon$. 
   \end{prop}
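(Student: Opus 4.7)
I would proceed by contradiction, supposing that for some piecewise constant $u: \mathbf{R} \to \R^+$ one has $|x_i(\tau)| \leq \epsilon$ for every $\tau > 0$ and every $i > 1$ (necessarily $u \not\equiv 0$, as the zero control trivially satisfies the bound; the proposition must implicitly concern non-trivial controls). The plan is to extract a growth lower bound on $|x_2|$ once the total input $U(\tau) := \int_0^\tau u(t)\,dt$ accumulates, and to use the threshold on $\epsilon$ to rule out the small-$|x_2|$ regime.

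First, I would pass to the interaction picture by setting $y_l(t) = e^{-i\lambda_l t} x_l(t)$: this preserves moduli and reduces the system to $\dot y_l = -u(t) \sum_k b_{l,k} e^{i(\lambda_k - \lambda_l)t} y_k$, with $y_1(0) = 1$ and $y_l(0) = 0$ for $l \geq 2$. Isolating the $k = 1$ driving term in the equation for $y_2$,
$$\dot y_2(t) = -u(t)\, b_{2,1}\, e^{i(\lambda_1-\lambda_2)t} y_1(t) + R(t), \qquad R(t) = -u(t) \sum_{k\geq 2} b_{2,k} e^{i(\lambda_k-\lambda_2)t} y_k(t),$$
and applying Cauchy--Schwarz on the tail yields $|R(t)| \leq u(t) \|B\phi_2\| \bigl(\sum_{k\geq 2}|y_k(t)|^2\bigr)^{1/2}$. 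Reading the contradiction hypothesis in the natural $\ell^2$ form $\sum_{k\geq 2}|y_k|^2 \leq \epsilon^2$ --- equivalent to $x(t)$ remaining within $H$-distance $\epsilon$ of the line $\mathbf{C}\phi_1$ --- gives $|R(t)| \leq \epsilon \|B\phi_2\|\, u(t)$ and $|y_1(t)| \geq \sqrt{1-\epsilon^2} \geq 1-\epsilon$ throughout.

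Integrating $\dot y_2$ on $[0,\tau]$ and using that for small enough $\tau$ both $y_1(t) \approx 1$ (by an analogous Cauchy--Schwarz bound on $\dot y_1$) and $e^{i(\lambda_1-\lambda_2)t} \approx 1$, the driving integral is close to $U(\tau)$, so
$$|y_2(\tau)| \geq \bigl[(1-\epsilon)\, b_{2,1} - \epsilon \|B\phi_2\|\bigr]\, U(\tau) + o(U(\tau)).$$
The hypothesis $\epsilon < b_{2,1}/(b_{2,1}+\|B\phi_2\|)$ is exactly the condition that the bracket is strictly positive, and for $\tau$ with $U(\tau)$ large enough this forces $|y_2(\tau)| > \epsilon$, contradicting $|x_2(\tau)| = |y_2(\tau)| \leq \epsilon$.

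The main obstacle is the gap between the literal per-component bound $|x_i| \leq \epsilon$ stated in the proposition and the summed bound $\sum_{i\geq 2}|x_i|^2 \leq \epsilon^2$ that is genuinely required for the Cauchy--Schwarz control of $R$; the argument above reads the hypothesis in this stronger $\ell^2$ sense, which is natural in the tracking context since modulus tracking of $\phi_1$ means keeping $x$ close to $\mathbf{C}\phi_1$ in $H$. A secondary point is justifying $y_1 \approx 1$ and $e^{i(\lambda_1-\lambda_2)t} \approx 1$ long enough for the lower bound to dominate; if $U$ grows only very slowly one may need to work on a short initial window rather than at large $\tau$, or iterate on successive short intervals.
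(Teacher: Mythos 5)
Your proposal contains a genuine gap that the paper's proof is specifically designed to fill: you never establish that $U(\tau)=\int_0^\tau u$ must become large, and without that, there is no contradiction. Your argument culminates in ``for $\tau$ with $U(\tau)$ large enough this forces $|y_2(\tau)|>\epsilon$,'' but nothing you have said forces $U$ to grow. A control $u$ with small compact support gives a bounded $U$, and your estimate then yields $|y_2|$ small forever, with no contradiction. The hypothesis you (implicitly) discard, namely $\lambda_1>0$, is precisely what rules this out. Indeed, after you pass to the interaction picture, $\lambda_1$ disappears from your estimates entirely --- yet if $\lambda_1=0$ the proposition (understood, as the paper's next sentence makes explicit, as ``one cannot keep $x(t)$ within $\epsilon$ of $\phi_1$ for all time'') is simply false, since $u\equiv 0$ holds $x(t)\equiv\phi_1$ fixed. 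An argument that does not use $\lambda_1>0$ cannot succeed.

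The paper's proof supplies the missing mechanism. It keeps the original coordinates and exploits the \emph{phase} constraint on $x_1$: if $\|x(t)-\phi_1\|<\epsilon$ for all $t$, then $a_1=\Re x_1>1-\epsilon$ and $|b_1|=|\Im x_1|<\epsilon$. Integrating $\dot b_1=\lambda_1 a_1+u\,\Im\langle B\phi_1,\sum_{j\geq 2}x_j\phi_j\rangle$ shows that the uncontrolled drift $\lambda_1\int_0^t a_1\geq\lambda_1(1-\epsilon)t$ can only be cancelled (so as to keep $|b_1|<\epsilon$) by a control term bounded pointwise by $\epsilon\|B\phi_1\|u(t)$; this forces $\int_0^t u$ to grow at least linearly in $t$, which is inequality (\ref{EQ_minor_int_u}). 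Only then does the paper integrate $\dot a_2$ and use $b_{2,1}a_1\geq b_{2,1}(1-\epsilon)>0$ together with $\epsilon<b_{2,1}/(b_{2,1}+\|B\phi_2\|)$ to conclude $a_2(t)\to\infty$, contradicting $|a_2|\leq 1$.

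There is also a secondary but real difficulty in your scheme that the paper sidesteps. Your lower bound on $|y_2(\tau)|$ requires both $e^{i(\lambda_1-\lambda_2)t}\approx 1$ and $y_1\approx 1$ on $[0,\tau]$, which needs $\tau$ small, while the conclusion needs $U(\tau)$ large; these two requirements can be incompatible (e.g.\ if $u$ is small but sustained, $U(\tau)$ becomes large only for $\tau$ so large that the phase factor has made many full turns and can cancel the accumulated integral). The paper avoids any such tension by not introducing the interaction picture at all: working with $a_2=\Re x_2$ directly, the driving term $u\,b_{2,1}a_1$ has a fixed sign because $a_1>1-\epsilon>0$ for all $t$, so there is never any phase cancellation and no smallness-of-$\tau$ assumption is needed. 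If you want to repair your approach, you should (i) bring in the constraint on $\arg x_1$ and the strict positivity of $\lambda_1$ to get a linear-in-$t$ lower bound on $\int_0^t u$, and (ii) replace the interaction-picture estimate on $|y_2|$ by a one-sided estimate on a fixed real linear functional of $x_2$ that accumulates monotonically, as the paper does.
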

   In other words, it is not possible to track with an arbitrary precision the constant trajectory $x_1 \equiv 1$.
   
   \begin{proof} We proceed by contradiction and assume that there exists some admissible control function $u:\mathbf{R}\rightarrow \R^+$ such that the corresponding trajectory of (\ref{EQ_systeme_non_trackable}) remains $\epsilon$-close to $\phi_1$ for every time. 
   From system (\ref{EQ_systeme_non_trackable}), we see that
   $$\frac{d}{dt}x_1=i \lambda _1 x_ 1 + u \left ( \sum_{j=2}^{+\infty} \langle B \phi_1,\phi_j \rangle x_j \right ), $$
   that is
   \begin{eqnarray}
   \dot{a}_1 & = & -\lambda _1 b_1 + u \Re \left ( \sum_{j=2}^{+\infty} \langle B \phi_1,\phi_j \rangle x_j \right ), \label{EQ_a_1}\\
   \dot{b}_1 & = & \lambda_ 1 a_1 +  u \Im \left ( \sum_{j=2}^{+\infty} \langle B \phi_1,\phi_j \rangle x_j \right ). \label{EQ_b_1}
   \end{eqnarray}
For any positive $t$, the integration of (\ref{EQ_b_1}) on $[0,t]$ yields $b_1(t)=\lambda_1 \int_0^t a_1(s) ds +$ $ \int_0^t u(s) \sum_{i=2}^{\infty} b_{1,i}b_i(s)ds$, that is $$-\epsilon \| B \phi_1 \| \int_0^t u(s) ds <\int_0^t u(s) \sum_{i=2}^{\infty} b_{1,i}b_i(s)ds = b_1(t) -\lambda_1 \int_0^t a_1< \epsilon -\lambda_1 (1-\epsilon) t$$
and
\begin{equation}\label{EQ_minor_int_u}
 \int_0^t u(s)ds > \frac{\lambda_1 (1-\epsilon) t}{\epsilon \|B \phi_1\|}.
\end{equation}
Integrating now $\dot{a}_2(s)=-\lambda_2 b_2(s) + u(s) \sum_{i\neq 2} b_{2,i} a_i(s)$ on $[0,t]$ for any $t>0$, one finds $$a_2(t)\geq -\lambda_2 \epsilon -\int_0^t u(s) ds \|B \phi_2\| \epsilon + b_{2,1} \int_0^tu(s) a_1(s)ds \geq \left ( b_{2,1} (1-\epsilon) -\epsilon \|B \phi_ 2\| \right ) \int_0^t u(s) ds.$$
For $\epsilon$ small enough, $K=\left ( b_{2,1} (1-\epsilon) -\epsilon \|B \phi_ 2\| \right )>0$, and from (\ref{EQ_minor_int_u}), we get $a_2(t)\geq K t$ for every positive $t$. Hence,
$a_2(t)$ tends to infinity as $t$ tends to infinity, what is impossible since $|a_2|\leq \|x\|$ which is constant equal to $1$. This gives the desired contradiction.
   \end{proof}

\section{Acknowledgments}

The author is grateful to Andrei Agrachev that inspired this work, and to Mario Sigalotti for many corrections and suggestions.

\end{document}